\documentclass[reqno]{article}

\usepackage{bbm, amsmath, amsfonts, amssymb, amsthm}
\usepackage{xparse}
\usepackage{cite}

\relpenalty = 10000
\binoppenalty = 10000

\newtheorem{lemma}{Lemma}
\newtheorem{proposition}{Proposition}
\newtheorem{theorem}{Theorem}
\newtheorem{corollary}{Corollary}[section]
\newtheorem{referencetheorem}{Theorem}

\newtheorem{referencelemma}[referencetheorem]{Lemma}

\theoremstyle{definition}\newtheorem*{definition}{Definition}
\newcommand{\D}{\mathcal{D}}
\newcommand{\n}[1]{\left\| #1 \right\|}
\newcommand{\br}[1]{\left( #1 \right)}

\newcommand{\m}[1]{\left| #1 \right|}

\NewDocumentCommand\sm{O{j}O{1}O{\infty}}{\sum\limits_{#1=#2}^{#3}}
\NewDocumentCommand\seq{O{j}O{1}O{\infty}m}{\left\{#4_{#1}\right\}_{#1=#2}^{#3}}

\begin{document}
\title{On the Approximate Weak Chebyshev Greedy Algorithm
	in Uniformly Smooth Banach Spaces}
\author{A.\,V.~Dereventsov}
\date{}
\maketitle

\begin{abstract}
We study greedy approximation in uniformly smooth Banach spaces.
The Weak Chebyshev Greedy Algorithm (WCGA), introduced and 
studied in~\cite{t_gabs}, is defined for any Banach space $X$ and 
a dictionary $\D$, and provides nonlinear $n$-term approximation 
with respect to $\D$.
In this paper we study the Approximate Weak Chebyshev Greedy Algorithm 
(AWCGA)~-- a modification of the WCGA that was studied in~\cite{t_gtabsa}. 
In the AWCGA we are allowed to calculate $n$-term approximation with
a perturbation in computing the norming functional and 
a relative error in calculating the approximant. 
Such permission is natural for the numerical 
applications and simplifies realization of the algorithm.
We obtain conditions that are necessary and sufficient for the convergence of the 
AWCGA for any element of $X$. In particular, we show that 
if perturbations and errors are from $\ell_1$ space then the conditions 
for the convergence of the AWCGA are the same as for the WCGA.
For specifically chosen perturbations and errors we estimate the rate of 
convergence for any element $f$ from the closure of the convex hull of $\D$ 
and demonstrate that in special cases the AWCGA performs as well as the WCGA.

\smallskip\noindent
\textbf{Keywords:} Banach space; nonlinear approximation; 
greedy algorithm; Weak Chebyshev Greedy Algorithm; 
Approximate Weak Chebyshev Greedy Algorithm.
\end{abstract}

\section{Introduction}\label{introduction}

This paper is devoted to the problem of greedy approximation 
in Banach spaces. This problem was extensively researched by 
V.\,N.~Temlyakov (see, for instance,
~\cite{t_gabs},~\cite{t_gtabsa},~\cite{t_ga}).
For the Weak Chebyshev Greedy Algorithm (WCGA) in uniformly smooth
Banach spaces, the sufficient conditions for convergence and
the rate of approximation were obtained in~\cite{t_gabs}.
In this article we study the Approximate Weak Chebyshev Greedy 
Algorithm~-- the modification of the WCGA with perturbations in computing 
the norming functionals and relative errors in calculating the approximants.
This algorithm was analyzed in~\cite{t_gtabsa} and the sufficient
conditions for convergence and an estimate for
the rate of convergence in a special case were obtained.
In this paper we apply the technique used in~\cite{t_gtabsa}. 
We prove weaker sufficient conditions for convergence, 
show that they are sharp, obtain an
estimate for the rate of approximation, and compare them 
with known results.

This paper is devoted to the Banach space setting.
Let $X$ be a real Banach space.
A dictionary is a set $\D$ of elements from $X$ if 
$\n{g} = 1$ for each element $g \in \D$ and 
$\overline{\text{span}\,\D} = X$.
For convenience of notations we consider symmetric dictionaries,
i.e. such dictionaries that $g \in \D$ implies $-g \in \D$. 
By $A_0(\D)$ we denote all the linear combinations of the elements of
a dictionary $\D$, and by $A_1(\D)$ we denote the closure of the convex hull of 
a dictionary $\D$.\\
For any non-zero element $f \in X$ let $F_f$ denote a norming functional of $f$, 
i.e. such a functional that $\n{F_f} = 1$ and $\n{F_f(f)} = \n{f}$.
The existence of such a functional is guaranteed by the Hahn-Banach theorem.\\
A weakness sequence $\seq[n]{t}$ is a sequence of 
real numbers $t_n$ such that $0 \leq t_n \leq 1$ for any $n \geq 1$.
A perturbation sequence $\seq[n][0]{\delta}$ is a sequence of 
real numbers $\delta_n$ such that $0 \leq \delta_n \leq 1$ for any $n \geq 0$.
An error sequence $\seq[n]{\eta}$ is a sequence of 
real numbers $\eta_n$ such that $\eta_n \geq 0$ for any $n \geq 1$.
By $\eta_0$ we denote the least upper bound of $\seq[n]{\eta}$.

For a Banach space $X$, a dictionary $\D$, and an element $f \in X$,
the Approximate Weak Chebyshev Greedy Algorithm (AWCGA) with 
a weakness sequence $\seq[n]{t}$, a perturbation sequence $\seq[n][0]{\delta}$,
and an error sequence $\seq[n]{\eta}$ is defined as follows.

\begin{definition}[AWCGA]
Set $f_0 = f$ and for $n \geq 1$
\begin{enumerate}
\item take any functional $F_{n-1}$ satisfying
\begin{equation}\label{awcga_F_n}
\n{F_{n-1}} \leq 1
\quad\text{and}\quad
F_{n-1}(f_{n-1}) \geq (1 - \delta_{n-1}) \n{f_{n-1}};
\end{equation}
and find any $\phi_n \in \D$ such that
\begin{equation}\label{awcga_phi_n}
F_{n-1}(\phi_n) \geq t_n \sup\limits_{g \in \D} F_{n-1}(g);
\end{equation}
\item for $\Phi_n = \text{span}\{\phi_j\}_{j=1}^{n}$ 
denote $E_n = \inf\limits_{G\in\Phi_n}\n{f - G}$
and find any $G_n \in \Phi_n$ satisfying
\begin{equation}\label{awcga_G_n}
\n{f - G_n} \leq (1 + \eta_n) E_n;
\end{equation}
\item set $f_n = f - G_n$.
\end{enumerate}
\end{definition}

Note that if the supremum is not attained, one can select $t_n < 1$
and proceed with the algorithm.
In particular, if $t_n < 1$ for any $n$ then the AWCGA can be realized
for all Banach spaces and all dictionaries.
Moreover, there might be several elements satisfying 
conditions~\eqref{awcga_F_n}--\eqref{awcga_G_n}, so the algorithm does 
not guarantee uniqueness of realization. 
We say that the AWCGA converges if $\n{f_n} \to 0$ for any realization 
of the algorithm. Conversely, the AWCGA diverges if there exists such a 
realization that $\n{f_n} \not\to 0$.

Recall that a Banach space is smooth if for any non-zero $f$
the norming functional $F_f$ is unique.
For a Banach space $X$ the modulus of smoothness $\rho(u)$ is defined by
\begin{equation}\label{modulusofsmoothness}
\rho(u) = \sup_{\n{x}=\n{y}=1} \frac{\n{x+uy} + \n{x-uy}}{2} - 1.
\end{equation}
A Banach space is uniformly smooth if $\rho(u) = o(u)$ as $u \to 0$.
We say that the modulus of smoothness $\rho(u)$ is of power type $1 \leq q \leq 2$ 
if $\rho(u) \leq \gamma u^q$ for some $\gamma > 0$.
It follows from definition~\eqref{modulusofsmoothness} that 
every Banach space has a modulus of smoothness of power type $1$ and
that every Hilbert space has a modulus of smoothness of power type $2$.
Denote by $\mathcal{P}_q$ the class of all Banach spaces with the 
modulus of smoothness of nontrivial power type $1 < q \leq 2$.
In particular, it is known (see Lemma~B.1 from~\cite{donahue_et_al})
that the modulus of smoothness $\rho_p(u)$ of $L_p$ space satisfies
$$
\rho_p(u) \leq \left\{
\begin{array}{ll}
\frac{1}{p} \, u^p & 1 < p \leq 2 \\
\frac{p-1}{2} \, u^2 & 2 \leq p < \infty
\end{array}
\right.,
$$
hence $L_p \in \mathcal{P}_q$, where $q = \min\{p; 2\}$.

We study approximation in uniformly smooth Banach spaces. 
The reason for such restriction is supported in~\cite{donahue_et_al}, where
it is shown that convergence of a nonlinear approximation 
strongly depends on the norm smoothness of the space, and that if a space is not smooth,
the convergence of incremental greedy approximation cannot be guaranteed for all 
elements $f$ from $X$ (Theorem~3.1); however, if the space is uniformly smooth, 
the convergence is guaranteed for all elements $f$ from $X$ (Theorem~3.4). 
Additionally, it is shown in~\cite{dubinin_gaa} that the smoothness 
of a space is required for the convergence of the WCGA.
For completeness, we prove this result in Proposition~\ref{proposition_not_smooth} 
from section~\ref{convergence_of_the_awcga}.

We note, however, that the uniform smoothness of a space is not a necessary 
condition for the convergence: it is shown in~\cite{dkt_csgabs} 
that if $X$ is a separable reflexive Banach space, then 
$X$ admits an equivalent norm for which the WCGA converges
for all dictionaries $\D$ and all elements $f \in X$ 
(see Theorem~2.6 from~\cite{dkt_csgabs}).
On the other hand, spaces which admit 
an equivalent uniformly smooth norm are precisely super-reflexive spaces 
(see e.g.~\cite{enflo_bswcgeuscn}).
Therefore, any separable reflexive Banach space $X$, which is 
not super-reflexive, admits an equivalent norm for which the WCGA converges
for all dictionaries and elements; however, this norm would not
be uniformly smooth. An example of a separable reflexive but not 
super-reflexive Banach space can be found in~\cite{beauzamy_ibstg}.

The goal of this paper is to establish sufficient and
necessary conditions for convergence of the AWCGA
for all uniformly smooth Banach spaces with the modulus of smoothness
of a nontrivial power type, all dictionaries, and all elements of the space.
We understand the necessity of a condition in the following sense:
if the given condition does not hold, there exists a Banach space
$X \in \mathcal{P}_q$, a dictionary $\D$, and an element $f \in X$
such that there is a realization of the AWCGA of $f$ that does not converge to $f$.

We start with the known result, which
states the sufficient condition for the convergence 
of the WCGA (see Corollary~2.1 from~\cite{t_gabs}).

\begin{referencetheorem}\label{t_wcga_convergence}
Let $X \in \mathcal{P}_q$ be a Banach space and $\D$ be any dictionary. 
Let $\seq[n]{t}$ be a weakness sequence.
Assume that
\begin{equation}\label{t_wcga_sum_t^p=infty}
\sm[n] t_n^p = \infty,
\end{equation}
where $p = q/(q-1)$.
Then the WCGA converges for any $f \in X$.
\end{referencetheorem}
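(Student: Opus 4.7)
The plan is to argue by contradiction. The Chebyshev step makes $\n{f_n}$ non-increasing — $\Phi_{n-1}\subset\Phi_n$ forces $\n{f_n}=E_n\le\n{f_{n-1}}$ — so $\n{f_n}\to a$ for some $a\ge 0$, and I would suppose $a>0$ and derive a contradiction with \eqref{t_wcga_sum_t^p=infty}.

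The first ingredient I need is a uniform lower bound on $F_{n-1}(\phi_n)$. Because $G_{n-1}$ minimizes $\n{f-\cdot}$ over $\Phi_{n-1}$ and $X$ is smooth, $F_{n-1}$ is the unique norming functional of $f_{n-1}$ and annihilates $\Phi_{n-1}$ (differentiate $\lambda\mapsto\n{f_{n-1}-\lambda G}$ at $0$ for each $G\in\Phi_{n-1}$); in particular $F_{n-1}(f)=\n{f_{n-1}}$. Since $\overline{\mathrm{span}\,\D}=X$ and $\D$ is symmetric, for any $\e>0$ I can produce a nonnegative combination $h=\sum_{i=1}^k c_i g_i$ with $g_i\in\D$, $B=\sum c_i$, and $\n{f-h}<\e$. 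Combining $\n{f_{n-1}}-\e\le F_{n-1}(h)\le B\sup_{g\in\D}F_{n-1}(g)$ with \eqref{awcga_phi_n} then yields
\[
F_{n-1}(\phi_n)\ge\frac{t_n\br{\n{f_{n-1}}-\e}}{B}.
\]

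The analytic core is the standard smoothness inequality $\n{x-\lambda y}\le\n{x}-\lambda F_x(y)+2\n{x}\rho\br{\lambda\n{y}/\n{x}}$, which follows directly from \eqref{modulusofsmoothness}. Applying it with $x=f_{n-1}$, $y=\phi_n$, using $\rho(u)\le\gamma u^q$, and invoking $\n{f_n}\le\n{f_{n-1}-\lambda\phi_n}$ (valid because $G_{n-1}+\lambda\phi_n\in\Phi_n$), I would obtain, for every $\lambda\ge 0$,
\[
\n{f_n}\le\n{f_{n-1}}-\lambda\,\frac{t_n\br{\n{f_{n-1}}-\e}}{B}+2\gamma\lambda^q\n{f_{n-1}}^{1-q}.
\]
Minimizing the right-hand side in $\lambda$ produces an inequality of the shape
\[
\n{f_n}\le\n{f_{n-1}}-C(q,\gamma)\,\n{f_{n-1}}\br{\frac{t_n\br{\n{f_{n-1}}-\e}}{B}}^p
\]
with $p=q/(q-1)$. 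Choosing $\e=a/2$ and restricting to $n$ large enough that $\n{f_{n-1}}\ge a$ would convert this to $\n{f_n}\le\n{f_{n-1}}-C' t_n^p$ for a positive constant $C'=C'(a,B,q,\gamma)$. Summing from that $n$ onward and invoking $\sm[n] t_n^p=\infty$ would force $\n{f_n}\to-\infty$, contradicting $\n{f_n}\ge a$.

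The main obstacle is the one-variable optimization coupled with the smoothness estimate — this is where the exponent $p=q/(q-1)$ arises and where the hypothesis $X\in\mathcal{P}_q$ is genuinely used. Monotonicity, the density/symmetry trick producing $h$, and the final divergence argument are by comparison routine bookkeeping.
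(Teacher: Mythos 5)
Your proposal is correct and follows essentially the same route as Temlyakov's original argument in \cite{t_gabs} (which the paper invokes through Lemma~\ref{reference_lemma_E_m<E_n} when proving the more general Theorem~\ref{awcga_convergence}): monotonicity of $\n{f_n}$ via nested Chebyshev projections, the annihilation property $F_{n-1}|_{\Phi_{n-1}}=0$ from smoothness, reduction to $A_1(\D)$ up to $\e$, the two-term smoothness inequality with $\rho(u)\le\gamma u^q$, and optimization in $\lambda$ producing the $t_n^p$-decrement. In short, you have reproduced the standard proof; no gap.
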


It is shown (see Proposition~2.1 from~\cite{t_gabs}) that
condition~\eqref{t_wcga_sum_t^p=infty} is sharp,
i.e. Theorem~\ref{t_wcga_convergence} gives necessary and sufficient
conditions of the convergence of the WCGA for all Banach spaces
$X \in \mathcal{P}_q$, all dictionaries $\D$, and all elements $f \in X$.

The next result states the sufficient condition for the convergence 
of the AWCGA (see Corollary~2.2 from~\cite{t_gtabsa}).

\begin{referencetheorem}\label{t_awcga_convergence}
Let $X \in \mathcal{P}_q$ be a Banach space and $\D$ be any dictionary.
Let $\seq[n]{t}$ be a weakness sequence,
$\seq[n][0]{\delta}$ be a perturbation sequence,
and $\seq[n]{\eta}$ be a bounded error sequence.
Assume that
\begin{align}
\label{t_awcga_sum_t^p=infty}
&\sm[n] t_n^p = \infty,\\
\label{t_awcga_delta=o(t^p)}
&\delta_n = o(t_{n+1}^p),\\
\label{t_awcga_eta=o(t^p)}
&\eta_n = o(t_n^p),
\end{align}
where $p = q/(q-1)$.
Then the AWCGA converges for any $f \in X$.
\end{referencetheorem}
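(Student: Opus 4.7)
The plan is to adapt the proof of Theorem~\ref{t_wcga_convergence} by carefully tracking the perturbation $\delta_{n-1}$ and the error $\eta_n$ through the argument. Since the AWCGA does not automatically yield monotonicity of $\n{f_n}$, I would use an approximation argument: fix $\e>0$, choose $f^{\e}=\sm[i][1][N]a_i g_i\in A_0(\D)$ with $g_i\in\D$ and $\n{f-f^{\e}}\le\e$, set $A=\sm[i][1][N]\m{a_i}$, and aim to show $\limsup_n\n{f_n}$ can be bounded by a constant multiple of $\e$. Letting $\e\to 0$ then yields $\n{f_n}\to 0$.

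The workhorse is the modulus-of-smoothness inequality $\n{x+uy}+\n{x-uy}\le 2\n{x}(1+\rho(u/\n{x}))$ combined with the approximate norming condition $F_{n-1}(f_{n-1})\ge(1-\delta_{n-1})\n{f_{n-1}}$ and $\n{F_{n-1}}\le 1$. Applying this with $x=f_{n-1}$ and $y=\phi_n$ gives
$$\n{f_{n-1}-\lambda\phi_n}\le\n{f_{n-1}}(1+\delta_{n-1})+2\n{f_{n-1}}\rho(\lambda/\n{f_{n-1}})-\lambda F_{n-1}(\phi_n),$$
so that, via $E_n\le\n{f_{n-1}-\lambda\phi_n}$ and $\n{f_n}\le(1+\eta_n)E_n$, the problem reduces to bounding $F_{n-1}(\phi_n)$ from below. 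To do this I would first establish an approximate orthogonality: for any $G\in\Phi_{n-1}$ and scalar $s$, the membership $G_{n-1}+sG\in\Phi_{n-1}$ implies $\n{f_{n-1}-sG}\ge E_{n-1}\ge\n{f_{n-1}}/(1+\eta_{n-1})$; feeding this back into the smoothness inequality applied to $G/\n{G}$ in place of $\phi_n$ and optimizing in $s$ under $\rho(u)\le\gamma u^q$ yields $\m{F_{n-1}(G)}\le C_1\n{G}(\delta_{n-1}+\eta_{n-1})^{1/p}$. Combined with the decomposition
$$A\sup_{g\in\D}F_{n-1}(g)\ge F_{n-1}(f^{\e})=F_{n-1}(f_{n-1})+F_{n-1}(G_{n-1})-F_{n-1}(f-f^{\e}),$$
the bound $\n{G_{n-1}}\le\n{f}+\n{f_{n-1}}$, and the weak greedy rule $F_{n-1}(\phi_n)\ge t_n\sup_g F_{n-1}(g)$, this translates into a lower bound of the form $F_{n-1}(\phi_n)\ge t_n\n{f_{n-1}}/A$ modulo perturbations involving $(\delta_{n-1}+\eta_{n-1})^{1/p}$ and $\e/A$.

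Substituting back into the smoothness inequality, optimizing $\lambda$ proportional to $\n{f_{n-1}}$, and incorporating the $(1+\eta_n)$ factor produces a recursion which, after raising to the $p$-th power and passing to $\n{f_n}^{-p}$, takes the familiar form
$$\n{f_n}^{-p}\ge\n{f_{n-1}}^{-p}+c\,t_n^p/A^p-r_n,$$
valid provided $\n{f_{n-1}}$ exceeds a fixed multiple of $\e$ and $n$ is large enough that the combined error $r_n$ is $o(t_n^p)$. Then $\sm[n]t_n^p=\infty$ forces $\n{f_n}$ below any prescribed multiple of $\e$ for sufficiently large $n$, and sending $\e\to 0$ completes the argument. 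The main obstacle is precisely the approximate-orthogonality estimate $\m{F_{n-1}(G)}=O(\n{G}(\delta_{n-1}+\eta_{n-1})^{1/p})$, which replaces the exact identity $F_{n-1}(G)=0$ enjoyed by the WCGA; since $1/p<1$ this perturbation is a priori larger than $\delta_{n-1}$ or $\eta_{n-1}$ individually, and one must verify that once it is raised to the power $p$ through the $F_{n-1}(\phi_n)^p$ in the optimized smoothness bound, the resulting contribution to $r_n$ still fits into the $o(t_n^p)$ budget allowed by the hypotheses $\delta_n=o(t_{n+1}^p)$ and $\eta_n=o(t_n^p)$.
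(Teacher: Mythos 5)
Your plan reproduces, essentially from scratch, the argument that the paper packages as Lemma~\ref{reference_lemma_E_m<E_n} (Lemma~2.3 of~\cite{t_gtabsa}) followed by iteration; the smoothness inequality, the lower bound $\n{f_{n-1}\pm sG}\geq E_{n-1}\geq\n{f_{n-1}}/(1+\eta_{n-1})$ used to derive the near-orthogonality $\m{F_{n-1}(G)}\lesssim\n{G}\br{\delta_{n-1}+\eta_{n-1}}^{1/p}$, the decomposition of $F_{n-1}(f^{\e})$, and the optimization in $\lambda$ are exactly the ingredients of that lemma, so the mathematics matches the paper's route.

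Your closing worry, though, is misdirected. The quantity $\br{\delta_{n-1}+\eta_{n-1}}^{1/p}$ (the $\beta_{n-1}/\n{f_{n-1}}$ of Lemma~\ref{reference_lemma_E_m<E_n}) does \emph{not} end up in the additive error $r_n$: it sits inside the lower bound for $F_{n-1}(\phi_n)$, hence inside the bracket that is raised to the $p$-th power and \emph{multiplies} $t_n^p$. It is therefore a multiplicative correction to the coefficient of the signal and need not be $o(t_n^p)$; it only needs to be small relative to $\n{f_{n-1}}$, which it is for large $n$ because the hypotheses $\delta_n = o(t_{n+1}^p)$, $\eta_n = o(t_n^p)$ together with $t_n\leq 1$ force $\delta_{n-1}\to 0$ and $\eta_{n-1}\to 0$, while $\n{f_{n-1}}\geq E_{n-1}$ stays bounded below (either by the contradiction assumption $E_n\to\alpha>0$, or by the threshold multiple of $\e$ in your $\limsup$ framing, since once $\n{f_{n-1}}$ falls below that threshold the monotonicity of $E_n$ already gives $\limsup\n{f_n}\lesssim\e$). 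The only terms that genuinely draw on the $o(t_n^p)$ budget are the additive $\delta_{n-1}$ from the approximate norming condition and the factor $(1+\eta_n)$ from the near-best approximant, and those are precisely what hypotheses~\eqref{t_awcga_delta=o(t^p)} and~\eqref{t_awcga_eta=o(t^p)} control. With this clarification the argument closes cleanly.
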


This paper proposes that we weaken 
conditions~\eqref{t_awcga_sum_t^p=infty}--\eqref{t_awcga_eta=o(t^p)}
by imposing them only on subsequences. We show that
in this case the specified conditions are necessary 
for the convergence of the AWCGA.
The following theorem is the main result of this paper.

\begin{theorem}\label{awcga_convergence}
Let $\seq[n]{t}$ be a weakness sequence,
$\seq[n][0]{\delta}$ be a perturbation sequence,
and $\seq[n]{\eta}$ be a bounded error sequence.
Then the AWCGA converges for any Banach space $X \in \mathcal{P}_q$, 
any dictionary $\D$, and any element $f \in X$ if and only if
there exists a subsequence $\{n_k\}_{k=1}^{\infty}$ such that
\begin{align}
\label{awcga_sum_t^p=infty}
&\sm[k] t_{n_k+1}^p = \infty,
\\
\label{awcga_delta=o(t^p)}
&\delta_{n_k} = o(t_{n_k+1}^p),
\\
\label{awcga_eta=o(t^p)}
&\eta_{n_k} = o(t_{n_k+1}^p),
\end{align}
where $p = q/(q-1)$.
\end{theorem}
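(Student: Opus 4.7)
The starting observation is that $\Phi_n \subset \Phi_{n+1}$, so $E_n$ is monotone non-increasing, and by \eqref{awcga_G_n} together with the assumed boundedness of $\seq[n]{\eta}$,
\begin{equation*}
\n{f_n} \leq (1 + \eta_n) E_n \leq (1 + \eta_0) E_n.
\end{equation*}
Hence $\n{f_n} \to 0$ iff $E_n \to 0$, and it suffices to prove $E_{n_k} \to 0$ along the hypothesized subsequence (monotonicity then transfers this to all of $\mathbb{N}$). Since $G_n + u\phi_{n+1} \in \Phi_{n+1}$ for any $u$, one has $E_{n+1} \leq \inf_{u \geq 0}\n{f_n - u\phi_{n+1}}$. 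Applying the modulus-of-smoothness inequality $\n{f_n - u\phi_{n+1}} + \n{f_n + u\phi_{n+1}} \leq 2\n{f_n}\br{1 + \rho(u/\n{f_n})}$, using $\n{f_n + u\phi_{n+1}} \geq F_n(f_n) + u F_n(\phi_{n+1})$ together with \eqref{awcga_F_n}, \eqref{awcga_phi_n} and the standard lower bound $\sup_{g\in\D}F_n(g) \geq F_n(f)/A$ valid for $f$ in a suitably scaled $A_1(\D)$, then optimizing $u$ and invoking $\rho(u) \leq \gamma u^q$ with $p = q/(q-1)$, yields an inequality of the form
\begin{equation*}
E_{n+1} \leq \n{f_n}\br{1 - c_1 t_{n+1}^p(1 - \delta_n)^p + r_n},
\end{equation*}
where $r_n$ is a lower-order correction involving $\delta_n$ and $\eta_n$ (paralleling the derivation in~\cite{t_gtabsa}). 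Substituting $\n{f_n} \leq (1+\eta_n) E_n$ and specializing to $n = n_k$, assumptions \eqref{awcga_delta=o(t^p)} and \eqref{awcga_eta=o(t^p)} give, for all $k$ sufficiently large,
\begin{equation*}
E_{n_k + 1} \leq \br{1 - c_2\, t_{n_k + 1}^p} E_{n_k}.
\end{equation*}
Monotonicity yields $E_{n_{k+1}} \leq E_{n_k+1}$, so iterating and applying \eqref{awcga_sum_t^p=infty} gives $E_{n_k} \to 0$. Extension from $f$ in a scaled $A_1(\D)$ to general $f \in X$ follows by approximating $f$ by a finite linear combination $h \in A_0(\D)$ and absorbing $\n{f-h}$ as an additive error, as in~\cite{t_gabs}.

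\textbf{Necessity.} Suppose no subsequence satisfies \eqref{awcga_sum_t^p=infty}--\eqref{awcga_eta=o(t^p)} simultaneously. A diagonal extraction argument shows that either $\sm[n] t_n^p < \infty$, or else there exist $\e > 0$ and an infinite set $S \subset \mathbb{N}$ with $\sum_{n \in S} t_{n+1}^p = \infty$ and $\max(\delta_n, \eta_n) \geq \e \, t_{n+1}^p$ for every $n \in S$. (Indeed, otherwise choosing $\e_k \to 0$ and picking $n_k \notin S_{\e_k}$ produces a subsequence satisfying all three conditions, contradicting the failure hypothesis.) In the former case, Proposition~2.1 of~\cite{t_gabs} already supplies an $X \in \mathcal{P}_q$, a dictionary, and an $f$ for which some realization of the WCGA, and hence of the AWCGA with $\delta = \eta = 0$, diverges. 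In the latter case, one refines that same sharpness construction: at each $n \in S$, the $\delta_n$-slack in \eqref{awcga_F_n} (respectively, the $\eta_n$-slack in \eqref{awcga_G_n}) permits selecting an adversarial $\phi_{n+1}$ whose contribution does not reduce $E_{n+1}$ appreciably, following the same strategy as in~\cite{t_gtabsa} but activated only along $S$, so that the residual cannot vanish along the full sequence.

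\textbf{Main obstacle.} The real difficulty is the necessity in the second case: assembling an explicit dictionary and element in some $X \in \mathcal{P}_q$ that actually exploits the forced perturbation or error $\gtrsim t_{n+1}^p$ on $S$, while keeping the algorithm well-defined at every step. The sufficiency is a careful refinement of the arguments in~\cite{t_gabs,t_gtabsa}; the new ingredient is simply that monotonicity of $E_n$ lets one discard every index outside $\{n_k\}$ without sacrificing the progress obtained at $n_k$, thereby localizing all three quantitative hypotheses to the subsequence.
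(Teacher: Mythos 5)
Your sufficiency outline matches the paper's approach in spirit: invoke Lemma~\ref{reference_lemma_E_m<E_n} at the indices $n_k$, obtain a contractive estimate there, and let monotonicity of $E_n$ bridge the remaining steps. But the sketch skips a structural point you cannot skip. For a general $f \in X$, the bound from Lemma~\ref{reference_lemma_E_m<E_n} carries an additive $\epsilon/\n{f_n}$ coming from approximating $f$ by some $f^{\epsilon}$ with $f^{\epsilon}/A \in A_1(\D)$, and this correction is only controllable while $\n{f_n}$ is bounded away from zero. A direct iteration towards $E_{n_k} \to 0$ therefore self-destructs: as soon as $E_{n_k}$ gets small the correction swamps the decay. ``Absorbing $\n{f-h}$ as an additive error'' does not fix this. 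The paper's proof is structured as a contradiction precisely for this reason — assume $E_n \to \alpha > 0$, set $\epsilon = \alpha/2$, use $\n{f_n} \geq \alpha$ to keep $\epsilon/\n{f_n} \leq 1/2$, and only then iterate to violate $E_{n_k} \geq \alpha$. Your write-up needs to be recast in that form; as stated the logic of the last step does not close.

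The necessity half is the genuine gap, and you flag it yourself. Your dichotomy is fine (it is essentially the paper's claim that some $\alpha > 0$ makes $\sum_{j \in \Lambda_2} t_j^p < \infty$, with $\Lambda_1$ the complement where $\delta_{n-1}$ or $\eta_{n-1}$ exceeds $\alpha t_n^p$), and the first branch is indeed covered by Proposition~2.1 of \cite{t_gabs}. But for the second branch you only assert that ``one refines that same sharpness construction,'' which is exactly the content that has to be supplied. The paper does so concretely: $X = \ell_q$, $\D$ the signed standard basis, $f = \sum_{j \in \Lambda_1} a_j e_j + \alpha^{1/q} \sum_{j \in \Lambda_2} t_j^{p/q} e_j$ with $\sum_{j\in\Lambda_1} a_j^q = 1$ and $a_j \leq \alpha^{1/q}$, followed by an explicit inductive realization in which the $\delta_{n-1}$-slack in \eqref{awcga_F_n} lets $F_{n-1}$ place a component $\delta_{n-1}^{1/p}$ on $e_0$ and $\eta_{n-1}^{1/p}$ on $e_1$ (so that on steps $n \in \Lambda_1$ the algorithm may pick the useless $e_0$ or $e_1$), and the $\eta_n$-slack in \eqref{awcga_G_n} lets the residual retain the full $\Lambda_1$-mass, keeping $\n{f_n}_q \geq 1$. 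Verifying $\n{F_{n-1}} \leq 1$, $F_{n-1}(f_{n-1}) \geq (1-\delta_{n-1})\n{f_{n-1}}$, and that the chosen $f_n$ is an admissible $(1+\eta_n)$-near best approximant is the substance of the necessity proof, and none of it can be waved at as a ``refinement.'' Without this construction the proposal does not establish necessity.
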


The particular advantage of this subsequence approach is that once a 
subsequence $\seq[k]{n}$ satisfying 
conditions~\eqref{awcga_sum_t^p=infty}--\eqref{awcga_eta=o(t^p)} is found,
only the choice of elements $\phi_{n_k}$ is essential for convergence, 
so arbitrary elements $\phi_j$ can be chosen on other steps.

In the special case $t_n = t > 0$ for all $n \geq 1$, 
Theorem~\ref{t_awcga_convergence} provides
that the condition $\lim\limits_{n\to\infty} \br{\delta_n + \eta_n} = 0$ 
guarantees the convergence of the AWCGA. 
Theorem~\ref{awcga_convergence} shows that in this case the weaker condition 
$\liminf\limits_{n\to\infty} \br{\delta_n + \eta_n} = 0$
is a necessary and sufficient condition for the convergence of the AWCGA, 
i.e. the following theorem holds.

\begin{theorem}\label{awcga_convergence_liminf_eta=0_liminf_delta=0} 
Let $\seq[n]{t}$ be a weakness 
sequence with $\liminf\limits_{n\to\infty} t_n > 0$,
$\seq[n][0]{\delta}$ be a perturbation sequence, 
and $\seq[n]{\eta}$ be a bounded error sequence.
Then the AWCGA converges for any Banach space $X \in \mathcal{P}_q$, 
any dictionary $\D$, and any element $f \in X$ if and only if 
$$
\liminf\limits_{n\to\infty} \br{\delta_n + \eta_n} = 0.
$$
\end{theorem}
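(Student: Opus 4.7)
The plan is to derive Theorem~\ref{awcga_convergence_liminf_eta=0_liminf_delta=0} as a direct consequence of Theorem~\ref{awcga_convergence}: the hypothesis $\liminf_{n\to\infty} t_n > 0$ provides a uniform tail lower bound $t_n \geq c > 0$ on the weakness sequence, and this collapses each of the three subsequence conditions \eqref{awcga_sum_t^p=infty}--\eqref{awcga_eta=o(t^p)} to a simple statement about $\delta_{n_k} + \eta_{n_k}$ alone. Both implications then follow by extracting or inspecting an appropriate subsequence.

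For sufficiency, I would assume $\liminf_n \br{\delta_n + \eta_n} = 0$, pick a subsequence $\seq[k]{n}$ with $n_k \to \infty$ along which $\delta_{n_k} + \eta_{n_k} \to 0$, and discard finitely many initial terms so that $t_{n_k+1} \geq c$. Then $\sm[k] t_{n_k+1}^p \geq \sm[k] c^p = \infty$, and the bounds $0 \leq \delta_{n_k}/t_{n_k+1}^p \leq \br{\delta_{n_k}+\eta_{n_k}}/c^p$ together with the analogous bound for $\eta_{n_k}/t_{n_k+1}^p$ yield $\delta_{n_k} = o\br{t_{n_k+1}^p}$ and $\eta_{n_k} = o\br{t_{n_k+1}^p}$. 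All three hypotheses of Theorem~\ref{awcga_convergence} are met, so the AWCGA converges for every $X \in \mathcal{P}_q$, every dictionary $\D$, and every $f \in X$.

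For necessity, I would assume the AWCGA converges in the indicated universal sense and apply Theorem~\ref{awcga_convergence} to extract a subsequence $\seq[k]{n}$ satisfying $\delta_{n_k} = o\br{t_{n_k+1}^p}$ and $\eta_{n_k} = o\br{t_{n_k+1}^p}$. Since $t_{n_k+1} \leq 1$ forces $t_{n_k+1}^p \leq 1$, these little-$o$ statements give $\delta_{n_k} + \eta_{n_k} \to 0$, hence $\liminf_n \br{\delta_n + \eta_n} = 0$. The whole argument is essentially book-keeping on top of Theorem~\ref{awcga_convergence}; the only detail to watch is that each subsequence we invoke has $n_k \to \infty$, so that the tail lower bound $t_n \geq c$ is available, and this is automatic. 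I do not foresee any genuine obstacle.
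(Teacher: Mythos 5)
Your proposal is correct and takes essentially the same route as the paper, which states Theorem~\ref{awcga_convergence_liminf_eta=0_liminf_delta=0} as a direct consequence of Theorem~\ref{awcga_convergence} without spelling out the details; you simply make that derivation explicit. Both directions check out: the tail bound $t_n \geq c > 0$ from $\liminf_n t_n > 0$ gives divergence of $\sum_k t_{n_k+1}^p$ and the little-$o$ conditions from $\delta_{n_k} + \eta_{n_k} \to 0$ for sufficiency, while $t_{n_k+1}^p \leq 1$ recovers $\delta_{n_k} + \eta_{n_k} \to 0$ from the little-$o$ conditions for necessity.
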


Note that by using weaker restrictions on the modulus of smoothness 
and by applying the technique from Theorem~2.2 from~\cite{t_gtabsa},
the previous theorem can be stated for any uniformly smooth Banach space. 

Combining Theorem~\ref{awcga_convergence} with Lemma~\ref{lemma_sequences_l_1}
proven in section~\ref{convergence_of_the_awcga},
we obtain the following theorem, which states that necessary and sufficient 
conditions of the convergence of the AWCGA in case when 
perturbation and error sequences are from $\ell_1$ space, are the same as for the WCGA.

\begin{theorem}\label{awcga_convergence_eta_delta_in_l_1}
Let $\seq[n]{t}$ be a weakness 
sequence, $\seq[n][0]{\delta} \in \ell_1$ be a perturbation sequence,
and $\seq[n]{\eta} \in \ell_1$ be an error sequence.
Then the AWCGA converges for any Banach space $X \in \mathcal{P}_q$, 
any dictionary $\D$, and any element $f \in X$ if and only if 
$$
\sm[n] t_n^p = \infty,
$$
where $p = q/(q-1)$.
\end{theorem}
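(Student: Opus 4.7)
The plan is to deduce this theorem as a direct corollary of Theorem~\ref{awcga_convergence} combined with the auxiliary Lemma~\ref{lemma_sequences_l_1}. The latter, judging from its placement and name, must be the statement that if a nonnegative sequence lies in $\ell_1$ and a nonnegative sequence $\{b_n\}$ satisfies $\sum b_n = \infty$, then one can extract a subsequence $\{n_k\}$ along which the $\ell_1$ sequence is of order $o(b_{n_k+1})$ while $\sum_k b_{n_k+1}$ still diverges.

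For the necessity (``only if'') direction I would argue as follows. Assuming that the AWCGA converges for every $X \in \mathcal{P}_q$, every dictionary, and every $f \in X$, the necessity part of Theorem~\ref{awcga_convergence} furnishes a subsequence $\{n_k\}$ with $\sm[k] t_{n_k+1}^p = \infty$. Since $\{t_{n_k+1}^p\}_{k \geq 1}$ is a subfamily of $\{t_n^p\}_{n \geq 1}$, this immediately forces $\sm[n] t_n^p = \infty$. This direction uses neither $\{\delta_n\} \in \ell_1$ nor $\{\eta_n\} \in \ell_1$.

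For the sufficiency (``if'') direction, suppose $\sm[n] t_n^p = \infty$, $\seq[n][0]{\delta} \in \ell_1$, and $\seq[n]{\eta} \in \ell_1$. Setting $a_n = \delta_n + \eta_n$ (with the understanding that the relevant tails lie in $\ell_1$ as a sum of two $\ell_1$ sequences) and $b_n = t_n^p$, I would apply Lemma~\ref{lemma_sequences_l_1} to extract a subsequence $\{n_k\}$ along which $a_{n_k} = o(t_{n_k+1}^p)$ and $\sm[k] t_{n_k+1}^p = \infty$. Since $0 \leq \delta_{n_k}, \eta_{n_k} \leq a_{n_k}$, this single $o$-condition on $a_{n_k}$ yields both~\eqref{awcga_delta=o(t^p)} and~\eqref{awcga_eta=o(t^p)} simultaneously, so all three subsequence conditions of Theorem~\ref{awcga_convergence} hold. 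Convergence of the AWCGA then follows from the sufficiency part of Theorem~\ref{awcga_convergence}.

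The principal obstacle is entirely isolated in Lemma~\ref{lemma_sequences_l_1}; once that lemma is granted, the present theorem is essentially a one-line combination of two prior results. The intuition behind the lemma is that an $\ell_1$ sequence cannot remain comparable to a nonnegative sequence whose series diverges on a set whose $b_n$-mass is itself infinite, so along the bulk of the divergent mass the $\ell_1$ sequence is asymptotically negligible relative to $b_{n+1}$, and a diagonal-type extraction produces the desired $\{n_k\}$.
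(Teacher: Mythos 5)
Your approach is exactly the one the paper intends: the paper simply asserts that the theorem follows by ``Combining Theorem~\ref{awcga_convergence} with Lemma~\ref{lemma_sequences_l_1}'' and gives no further details, and your two-direction argument fills in precisely that combination. The necessity direction is correct as written, and you are right that it uses neither $\ell_1$ hypothesis.

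There is one indexing inaccuracy worth flagging. You speculate that Lemma~\ref{lemma_sequences_l_1} produces a subsequence $\{n_k\}$ with $a_{n_k} = o(b_{n_k+1})$ and $\sum_k b_{n_k+1} = \infty$, but the lemma as stated in the paper gives $a_{n_k} = o(b_{n_k})$ and $\sum_k b_{n_k} = \infty$ (no index shift). Applied directly to $a_n = \delta_n + \eta_n$ and $b_n = t_n^p$ this yields $\delta_{n_k} + \eta_{n_k} = o(t_{n_k}^p)$ and $\sum_k t_{n_k}^p = \infty$, which does not match the shifted conditions \eqref{awcga_sum_t^p=infty}--\eqref{awcga_eta=o(t^p)} of Theorem~\ref{awcga_convergence}, which require $o(t_{n_k+1}^p)$ and $\sum_k t_{n_k+1}^p = \infty$. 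The repair is straightforward: apply Lemma~\ref{lemma_sequences_l_1} instead to $\tilde a_n = \delta_{n-1} + \eta_{n-1}$ and $b_n = t_n^p$ (still summable and divergent respectively), obtain $\{m_k\}$ with $\sum_k t_{m_k}^p = \infty$ and $\delta_{m_k-1} + \eta_{m_k-1} = o(t_{m_k}^p)$, and set $n_k = m_k - 1$ (discarding any $m_k = 1$). This produces exactly the subsequence required by Theorem~\ref{awcga_convergence}, after which your argument that $\delta_{n_k}, \eta_{n_k} \le \delta_{n_k} + \eta_{n_k}$ closes both $o$-conditions at once. With that relabeling your proof is complete and coincides with the paper's intended argument.
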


We note that with the minor changes in the proof of 
Theorem~\ref{t_awcga_convergence}, condition~\eqref{t_awcga_eta=o(t^p)} can be 
replaced with $\eta_n = o(t_{n+1}^p)$. It is easy to see that this modified 
version follows from Theorem~\ref{awcga_convergence}. 
In section~\ref{convergence_of_the_awcga} we show that Theorem~\ref{awcga_convergence}
implies the unmodified version of Theorem~\ref{t_awcga_convergence} as well.

\section{Convergence of the AWCGA}\label{convergence_of_the_awcga}

First, we justify the restrictions imposed on a Banach space $X$.
The following proposition shows that if $X$ is not smooth, then 
for some dictionary $\D$ and some function $f$, the WCGA of $f$ 
does not converge even if $f$ is a finite linear combination 
of the elements of the dictionary.

\begin{proposition}\label{proposition_not_smooth}
Let $X$ be a nonsmooth Banach space. Then there exists a dictionary $\D$
and an element $f \in A_0(\D)$ such that WCGA of $f$ with any weakness 
sequence $\seq[n]{t}$ does not converge to $f$.
\end{proposition}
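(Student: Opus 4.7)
The plan is to exploit the freedom in selecting a norming functional at a non-smooth point to trap a realization of the WCGA in a one-dimensional subspace orthogonal to the residual. From the non-smoothness hypothesis I extract $h \in X$ with $\n{h} = 1$ and two distinct norming functionals $F_1, F_2 \in X^*$. Pick any $y_0 \in X$ with $F_1(y_0) \neq F_2(y_0)$ and set
\[
y := \frac{y_0 - F_2(y_0)\,h}{F_1(y_0) - F_2(y_0)},
\]
so that $F_1(y) = 1$ and $F_2(y) = 0$; then let $\phi := y/\n{y}$. Since $F_2$ is a norming functional of $h$ with $F_2(\phi) = 0$, James's criterion yields the Birkhoff--James orthogonality $h\perp_B\phi$, and in particular $\inf_{\lambda\in\mathbb{R}} \n{h - \lambda\phi} = \n{h} = 1$.

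Next I would construct the dictionary. Let $\psi_0 := (h - y)/\n{h - y}$ (well-defined since $F_2(h-y) = 1$ forces $h \ne y$), pick a countable dense set $\{u_i\}_{i \ge 1}$ of nonzero vectors in $X$ not proportional to $h$, and define
\[
\psi_i := \frac{u_i - F_1(u_i)\,h}{\n{u_i - F_1(u_i)\,h}}, \qquad i \ge 1.
\]
Each $\psi_i$ (for $i \ge 0$) is then a unit vector with $F_1(\psi_i) = 0$. Set $\D := \{\pm\phi\}\cup\{\pm\psi_i : i \ge 0\}$; this is a symmetric dictionary, since $\overline{\text{span}\,\D} = \text{span}\{\phi\} + \ker F_1 = X$, and the identity $h = \n{y}\,\phi + \n{h-y}\,\psi_0$ places $f := h$ into $A_0(\D)$. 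For the realization of the WCGA I would take inductively $F_{n-1} := F_1$ (a legitimate norming functional of $f_{n-1} = h$) and $\phi_n := \phi$ at every step. Every dictionary element other than $\phi$ has $F_1$-value in $\{0, -F_1(\phi)\}$, so $\sup_{g\in\D}F_1(g) = F_1(\phi)$ is attained at $\phi$, and the weak greedy condition $F_1(\phi_n) \ge t_n\sup_g F_1(g)$ holds for every $t_n \in [0,1]$. Then $\Phi_n = \text{span}\{\phi\}$, and $h\perp_B\phi$ makes $G_n := 0$ a valid best approximation, so $f_n = h$ for every $n$ and $\n{f_n} = 1 \not\to 0$.

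The main obstacle is arranging the dictionary so that the stagnant choice $\phi_n = \phi$ is compatible with every weakness sequence, in particular when $t_n = 1$: the supremum of $F_1$ over $\D$ must be attained at $\phi$ rather than at $h$ or elsewhere in $\D$. Excluding $h$ itself from $\D$ and routing every auxiliary element through the projection $u \mapsto u - F_1(u)h$ onto $\ker F_1$ accomplishes exactly this, while the explicit decomposition of $h$ through $\phi$ and $\psi_0$ keeps $h$ inside $A_0(\D)$ as required. Once this configuration is in place, the induction $f_n = h$ is automatic and yields the diverging realization.
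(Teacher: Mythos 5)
Your construction is essentially the same as the paper's: from two distinct norming functionals $F_1,F_2$ at a unit vector $h$, you produce a unit vector $\phi$ with $F_1(\phi)>0$ which is Birkhoff--James orthogonal to $h$ (via $F_2(\phi)=0$), project the remaining dictionary into $\ker F_1$, and then the realization $F_{n-1}=F_1$, $\phi_n=\phi$, $G_n=0$ keeps $f_n=h$ forever. The paper's proof uses the same two ingredients: the "midpoint" element $g_0$ plays the role of your $\phi$, except it is chosen so that $F(g_0)>0>F'(g_0)$ (making $\mu=0$ the \emph{unique} minimizer of $\|f-\mu g_0\|$, a slightly stronger conclusion than you need), and the auxiliary dictionary is likewise pushed into $\ker F$; the rest is identical. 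Your use of exactly $F_2(\phi)=0$ is marginally slicker, since a single duality inequality immediately gives $\inf_\lambda\|h-\lambda\phi\|=\|h\|$, which is all the argument requires because one only needs to exhibit \emph{some} divergent realization.

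One genuine, though easily repaired, gap: you build the remaining dictionary elements $\psi_i$ by projecting a \emph{countable dense set} $\{u_i\}$, which tacitly assumes $X$ is separable. The proposition is stated for an arbitrary nonsmooth Banach space (e.g.\ $\ell_\infty$), and the paper's proof handles this by projecting an arbitrary dictionary $\{e_j\}_{j\in\Lambda}$ rather than a countable dense set. Replace your $\{u_i\}_{i\ge1}$ with any dictionary of $X$ (which always exists, e.g.\ the unit sphere) and define $\psi_j$ only for those indices where the projection onto $\ker F_1$ along $h$ is nonzero; the rest of your argument, including the verification that $\overline{\operatorname{span}\D}=\operatorname{span}\{\phi\}\oplus\ker F_1=X$ and the decomposition $h=\|y\|\phi+\|h-y\|\psi_0$ placing $h$ in $A_0(\D)$, then goes through unchanged.
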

\begin{proof}
Since $X$ is not smooth, there exists an element $f$ from the unit sphere 
of $X$ with two norming functionals $F$ and $F'$ such that $F \not\equiv F'$.
Then there exists an element $g\in X$ such that $F(g) \ne F'(g)$.
Without loss of generality assume that $F(g) > F'(g)$.
Denote
\begin{equation}
\label{not_smooth_g_0_g_1}
g_0 = \alpha_0 \br{g - \frac{F(g)+F'(g)}{2}f} 
\quad\text{and}\quad
g_1 = \alpha_1 \br{g - F(g)f},
\end{equation}
where $\alpha_0 = \n{g - \frac{F(g)+F'(g)}{2}f}^{-1}$
and $\alpha_1 = \n{g - F(g)f}^{-1}$. Note that $F(g_0) > 0$ and $F'(g_0) < 0$.
Let $\{e_j\}_{j \in \Lambda}$ be a dictionary in $X$. 
Consider the set of indices 
$$
\Lambda' = \left\{j \in \Lambda : e_j - \frac{F(e_j)}{F(g_0)}g_0 \ne 0\right\}.
$$
Define for any $j \in \Lambda'$
\begin{equation}
\label{not_smooth_e'_j}
e'_j = \beta_j \br{e_j - \frac{F(e_j)}{F(g_0)}g_0},
\quad\text{where}\quad
\beta_j = \n{e_j - \frac{F(e_j)}{F(g_0)}g_0}^{-1}. 
\end{equation}
We claim that $\D = \{\pm g_0, \pm g_1\} \cup \{\pm e'_j\}_{j \in \Lambda'}$
is a dictionary as well. Indeed, take any $h \in X$ and pick any $\epsilon > 0$. 
Then, since $\{e_j\}_{j\in\Lambda}$ is a dictionary,
there exist coefficients $\{a_j\}_{j\in\Lambda}$ such that
$\n{h - \sum\limits_{j \in \Lambda} a_j e_j} < \epsilon$.
Since 
\begin{align*}
\sum_{j \in \Lambda} a_j e_j
&= \sum_{j \in \Lambda'} a_j \br{\beta^{-1}_j e'_j + \frac{F(e_j)}{F(g_0)}g_0}
+ \sum_{j \in \Lambda \setminus \Lambda'} a_j \frac{F(e_j)}{F(g_0)}g_0
\\
&= \br{\sum_{j \in \Lambda} a_j \frac{F(e_j)}{F(g_0)}} g_0
+ \sum_{j \in \Lambda'} \frac{a_j}{\beta_j} e'_j
\\
&= \frac{F(h)}{F(g_0)} \, g_0 + \sum_{j \in \Lambda'} \frac{a_j}{\beta_j} \, e'_j,
\end{align*}
then $h \in \overline{\text{span}\,\D}$, and $\D$ is a dictionary.
Note that $f \in \text{span}\{g_0, g_1\}$, and thus $f \in A_0(\D)$.
However, we claim that element $g_0$ does not approximate $f$, i.e.
$$
\text{arg}\min_{\mu} \n{f - \mu g_0} = 0.
$$
Indeed, for any $\mu > 0$
\begin{align*}
\n{f + \mu g_0} &\geq F(f + \mu g_0) = 1 + \mu F(g_0) > \n{f},
\\
\n{f - \mu g_0} &\geq F'(f - \mu g_0) = 1 - \mu F'(g_0) > \n{f}.
\end{align*}
Additionally, the choice of the elements~\eqref{not_smooth_g_0_g_1} 
and~\eqref{not_smooth_e'_j} of the dictionary $\D$ provides
\begin{align*}
F(g_0) &> 0,
\\
F(g_1) &= 0,
\\
F(e'_j) &= 0, \quad\text{for any } j \in \Lambda'.
\end{align*}
Then consider the following realization of WCGA of $f$:
for any $n \geq 1$ choose $F_{n-1} = F$, $\phi_n = g_0$, 
and $f_n = f$.
Hence $\n{f_n} \not\to 0$ and WCGA does not converge.
\end{proof}

Next we demonstrate that assumptions of Theorem~\ref{awcga_convergence} are 
weaker than the ones of Theorem~\ref{t_awcga_convergence}. 
Assume that conditions~\eqref{t_awcga_sum_t^p=infty}--\eqref{t_awcga_eta=o(t^p)}
hold. Then condition~\eqref{awcga_delta=o(t^p)} holds for any subsequence $\seq[k]{n}$. 
The following lemma shows that conditions~\eqref{t_awcga_sum_t^p=infty} and 
\eqref{t_awcga_eta=o(t^p)} imply conditions~\eqref{awcga_sum_t^p=infty} and 
\eqref{awcga_eta=o(t^p)} with some subsequence $\seq[k]{n}$.

\begin{lemma}
Let $\seq[n]{a}$ and $\seq[n]{b}$ be any such nonnegative sequences that
$$
\sm[n] b_n = \infty
\quad\text{and}\quad
a_n = o(b_{n-1}).
$$
Then there exists a subsequence $\seq[k]{n}$ such that
$$
\sm[k] b_{n_k} = \infty
\quad\text{and}\quad
a_{n_k} = o(b_{n_k}).
$$
\end{lemma}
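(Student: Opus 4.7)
The plan is to construct the subsequence by retaining exactly those indices at which $b_n$ is not much smaller than $b_{n-1}$, since at such indices the hypothesis $a_n = o(b_{n-1})$ transfers to $a_n = o(b_n)$ essentially for free. Concretely, I would set $S = \{n \geq 2 : b_n \geq b_{n-1}/2\}$, let $\{n_k\}$ be the increasing enumeration of $S$, and then verify the two required conditions.

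The second condition is immediate: for each $n_k \in S$ we have $b_{n_k - 1} \leq 2 b_{n_k}$, so $a_{n_k}/b_{n_k} \leq 2 a_{n_k}/b_{n_k - 1}$, and the right-hand side tends to $0$ because $a_n/b_{n-1} \to 0$ and $n_k \to \infty$. The degenerate case $b_{n_k - 1} = 0$ is harmless: such an index lies in $S$ automatically, and the hypothesis $a_n = o(b_{n-1})$ forces $a_{n_k} = 0$ for all sufficiently large $k$.

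The main work, and the step I expect to be the real obstacle, is showing $\sum_{k} b_{n_k} = \infty$, i.e., that $S$ carries the full divergent mass of $\sum_n b_n$. I would argue by a descent bookkeeping. Partition the tail $\{n \geq 2\}$ into consecutive blocks, each of the form $[m_i, m_{i+1} - 1]$, where $m_0 < m_1 < \ldots$ is the enumeration of $S$ and the sandwiched indices $m_i + 1, \ldots, m_{i+1} - 1$ all lie in the complement $S^{c}$. Inside such a run, each term is strictly less than half of its predecessor, so $b_{m_i + j} < b_{m_i}/2^j$ by induction, and the run contributes less than $b_{m_i} \sum_{j \geq 1} 2^{-j} = b_{m_i}$ to the total. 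Hence each block contributes less than $2 b_{m_i}$, giving the bound $\sum_{n=2}^{\infty} b_n \leq b_1 + 2 \sum_{n \in S} b_n$ after handling the (geometrically-controlled, hence finite) initial prefix before $m_0$. Since the left side is infinite, so is $\sum_{n \in S} b_n$.

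A brief sanity check rules out pathological cases that could collapse the argument: if $S$ were empty or merely finite, then from some index onward every term would be less than half of its predecessor, the sequence would decay geometrically, and $\sum_n b_n$ would converge, contradicting the hypothesis. So $S$ is automatically infinite, $n_k \to \infty$, and the two conclusions combine to give the claimed subsequence.
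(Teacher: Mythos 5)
Your proof is correct and follows the same core idea as the paper: keep exactly the indices where $b_n \geq b_{n-1}/2$, so that the little-$o$ control on $b_{n-1}$ transfers to $b_n$ at the cost of a factor $2$. The only difference is in showing this set carries the divergent mass: the paper splits into two cases according to whether the sum of the "block leaders" $b_{\lambda_k}$ (first index of each run in the complement) diverges, whereas your block-bookkeeping attributes each complementary run's geometrically small mass to the element of $S$ immediately preceding it and gets $\sum_{n\ge 2} b_n \le b_1 + 2\sum_{n\in S} b_n$ in one stroke; this folds both of the paper's cases into a single estimate and is a mild streamlining rather than a different route.
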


\begin{proof}
We claim that there exists a subsequence $\seq[k]{n}$ such that 
\begin{equation}\label{proposition_assumption}
\frac{b_{n_k}}{b_{n_k-1}} \geq \frac{1}{2}
\quad\text{and}\quad
\sm[k] b_{n_k} = \infty.
\end{equation}
Indeed, let $\Lambda \subset \mathbb{N}$ denote the set of all such
indices $\lambda \in \mathbb{N}$ that
\begin{equation*}
\frac{b_{\lambda}}{b_{\lambda-1}} < \frac{1}{2}.
\end{equation*}
Then $\Lambda = \bigcup\limits_{k=1}^{N}  \Lambda_k$,
where $\seq[k][1][N]{\Lambda}$ are disjoint connected subsets of $\Lambda$ 
(i.e. if $\mu_1, \mu_2 \in \Lambda_k$, then for any 
$\lambda \in \mathbb{N}$ such that $\mu_1 < \lambda < \mu_2$,
$\lambda \in \Lambda_k$) for some $N \in \mathbb{N}\cup\{\infty\}$.
For each $k \in [1;\,N]$ define 
$\lambda_k = \min\limits_{\lambda\in\mathbb{N}}{\{\lambda \in \Lambda_k\}}$~-- 
the minimal element of the set $\Lambda_k$~--
and consider the sum $\sm[k][1][N] b_{\lambda_k}$.\\
If $\sm[k][1][N] b_{\lambda_k} = \infty$, 
take $\seq[k]{n} =  \{\lambda_k - 1\}_{k=2}^{\infty}$.
Note that in this case $N = \infty$ and the sequence $\seq[k][2]{\lambda}$
(and therefore $\seq[k]{n}$) is defined correctly.
It follows from the definition of $\lambda_k$ that 
$\lambda_k - 1 \not\in \Lambda$, hence conditions~\eqref{proposition_assumption}
hold for the sequence $\seq[k]{n}$.\\
If $\sm[k][1][N] b_{\lambda_k} < \infty$, then 
note that for any $\lambda \in \Lambda_k$ the definition of the set 
$\Lambda$ implies $b_{\lambda} < 2^{-(\lambda-\lambda_k)} b_{\lambda_k}$.
Hence
\begin{equation*}
\sum_{\lambda\in\Lambda} b_{\lambda} 
= \sum_{k=1}^{N} \sum_{\lambda\in\Lambda_k} b_{\lambda}
< \sum_{k=1}^{N} \sum_{j=1}^{|\Lambda_k|} 2^{-(j-1)} b_{\lambda_k}
\leq 2 \sum_{k=1}^{N} b_{\lambda_k}
< \infty.
\end{equation*}
Then take $\seq[k]{n} = \mathbb{N} \setminus \Lambda$
and note that for this sequence 
conditions~\eqref{proposition_assumption} hold.

Now choose any subsequence $\seq[k]{n}$ satisfying
conditions~\eqref{proposition_assumption}. 
Then
$$
\frac{a_{n_k}}{b_{n_k}} 
\leq 2\frac{a_{n_k}}{b_{n_k-1}} \to 0,
$$
hence $a_{n_k} = o(b_{n_k})$, which proves the lemma.
\end{proof}

We will use the following lemma (see Lemma~2.3 from~\cite{t_gtabsa}) 
to investigate convergence and the rate of approximation of the AWCGA.

\begin{referencelemma}\label{reference_lemma_E_m<E_n}
Let $X$ be a uniformly smooth Banach space with the modulus of
smoothness $\rho(u)$ and $\D$ be any dictionary. 
Take a number $\epsilon \geq 0$ and two elements 
$f$ and $f^{\epsilon}$ from $X$ such that $\n{f-f^{\epsilon}} \leq \epsilon$
and $f^{\epsilon}/A \in A_1(\D)$ with some number $A = A(\epsilon) > 0$.
Then for the AWCGA with a weakness sequence $\seq[n]{t}$, 
a perturbation sequence $\seq[n][0]{\delta}$, 
and a bounded error sequence $\seq[n]{\eta}$ 
\begin{equation*}
E_{n+1} \leq \n{f_n} \inf_{\lambda\geq0} 
\br{1 + \delta_n - \frac{\lambda t_{n+1}}{A} \br{1 - \delta_n 
- \frac{\beta_n + \epsilon}{\n{f_n}}} + 2\rho\br{\frac{\lambda}{\n{f_n}}}}
\end{equation*}
for any $n \geq 0$, where 
\begin{equation*}
\beta_n = \inf_{\mu\geq0} 
\frac{\delta_n + \eta_n + 2\rho\br{\mu(2 + \eta_n)\n{f}}}{\mu}.
\end{equation*}
\end{referencelemma}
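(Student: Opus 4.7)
The plan is to bound $E_{n+1}$ by testing the approximation at $G_n + \lambda\phi_{n+1} \in \Phi_{n+1}$ with $\lambda \geq 0$, which yields $E_{n+1} \leq \n{f_n - \lambda\phi_{n+1}}$. By symmetry of $\D$ we have $\sup_{g\in\D} F_n(g) \geq 0$, so~\eqref{awcga_phi_n} forces $F_n(\phi_{n+1}) \geq 0$. Combining the modulus-of-smoothness inequality
$$
\n{f_n - \lambda\phi_{n+1}} + \n{f_n + \lambda\phi_{n+1}} \leq 2\n{f_n}\br{1 + \rho\br{\lambda/\n{f_n}}}
$$
with the lower bound $\n{f_n + \lambda\phi_{n+1}} \geq F_n(f_n + \lambda\phi_{n+1}) \geq (1 - \delta_n)\n{f_n} + \lambda F_n(\phi_{n+1})$ supplied by~\eqref{awcga_F_n}, one obtains the linearized estimate
$$
\n{f_n - \lambda\phi_{n+1}} \leq (1 + \delta_n)\n{f_n} - \lambda F_n(\phi_{n+1}) + 2\n{f_n}\rho\br{\lambda/\n{f_n}}.
$$

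Next I would lower-bound $F_n(\phi_{n+1})$. By~\eqref{awcga_phi_n} and the inclusion $f^\e/A \in A_1(\D)$, we have $F_n(\phi_{n+1}) \geq t_{n+1}\sup_{g\in\D}F_n(g) \geq t_{n+1} F_n(f^\e)/A$, since every convex combination of dictionary elements is bounded at $F_n$ by $\sup_{g\in\D}F_n(g)$. The hypothesis $\n{f - f^\e} \leq \e$ yields $F_n(f^\e) \geq F_n(f) - \e$, and the decomposition $F_n(f) = F_n(f_n) + F_n(G_n)$ combined with~\eqref{awcga_F_n} produces
$$
F_n(\phi_{n+1}) \geq \frac{t_{n+1}}{A}\br{(1 - \delta_n)\n{f_n} + F_n(G_n) - \e}.
$$

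The main obstacle is the lower bound $F_n(G_n) \geq -\beta_n$, which quantifies the bias introduced by the approximate Chebyshev projection~\eqref{awcga_G_n}. For any $c > 0$ the vector $(1 \mp c)G_n$ lies in $\Phi_n$, so $\n{f_n \pm cG_n} \geq E_n \geq \n{f_n}/(1 + \eta_n)$. Combining the modulus-of-smoothness inequality applied to $f_n$ and $cG_n$ with the near-norming estimate $\n{f_n - cG_n} \geq F_n(f_n - cG_n) \geq (1 - \delta_n)\n{f_n} - cF_n(G_n)$ yields
$$
-cF_n(G_n) \leq \n{f_n}\br{\delta_n + \frac{\eta_n}{1 + \eta_n} + 2\rho\br{c\n{G_n}/\n{f_n}}}.
$$
The crude bound $\eta_n/(1+\eta_n) \leq \eta_n$ and the estimate $\n{G_n} \leq \n{f} + \n{f_n} \leq (2 + \eta_n)\n{f}$, which follows from~\eqref{awcga_G_n} applied at $G = 0$, allow us to replace $\n{G_n}$ by $(2 + \eta_n)\n{f}$ in the argument of the monotone function $\rho$. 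After the substitution $\mu = c/\n{f_n}$ and minimization over $\mu > 0$ we arrive at $-F_n(G_n) \leq \beta_n$.

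Substituting the lower bound on $F_n(\phi_{n+1})$ into the linearized estimate for $\n{f_n - \lambda\phi_{n+1}}$ and noting that $\lambda \geq 0$ preserves the inequality direction, the infimum over $\lambda \geq 0$ delivers the stated bound on $E_{n+1}$. I expect the $\beta_n$ estimate to be the only delicate step, since the other pieces are direct consequences of the modulus-of-smoothness identity and the defining properties~\eqref{awcga_F_n}--\eqref{awcga_G_n} of the AWCGA.
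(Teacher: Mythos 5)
Your argument is correct and, as far as one can tell from the source, reconstructs the proof of Lemma~2.3 from~\cite{t_gtabsa} faithfully; the paper itself only cites that lemma and does not reprove it, so there is no in-paper proof to compare against. All the steps check out: the test vector $G_n + \lambda\phi_{n+1}\in\Phi_{n+1}$ gives $E_{n+1}\le\n{f_n-\lambda\phi_{n+1}}$; the two-sided modulus bound together with $\n{F_n}\le 1$, $F_n(f_n)\ge(1-\delta_n)\n{f_n}$ yields the linearized estimate; the chain $F_n(\phi_{n+1})\ge t_{n+1}\sup_{g\in\D}F_n(g)\ge t_{n+1}F_n(f^\e)/A$ is valid since $F_n$ is bounded by its sup over $\D$ on $A_1(\D)$; and the bias estimate $-F_n(G_n)\le\beta_n$ is obtained correctly by testing $(1\mp c)G_n\in\Phi_n$, using $E_n\ge\n{f_n}/(1+\eta_n)$ together with the modulus bound, and then replacing $\eta_n/(1+\eta_n)$ by $\eta_n$ and $\n{G_n}$ by $(2+\eta_n)\n{f}$ (the latter valid because $\rho$ is nondecreasing on $[0,\infty)$, being an even convex function vanishing at the origin). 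The only cosmetic caveats are that the whole statement implicitly assumes $\n{f_n}>0$, and that the infimum defining $\beta_n$ is really taken over $\mu>0$; neither affects the substance of your argument.
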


We are now ready to prove Theorem~\ref{awcga_convergence}.
Note that the proof of sufficiency follows closely the proof of
Theorem~\ref{t_awcga_convergence} given in~\cite{t_gtabsa}.
One can acquire it by applying the estimates in Theorem~\ref{t_awcga_convergence} 
only for indices $\seq[k]{n}$ instead of every index $n \in \mathbb{N}$.
We present it here for completeness.

\begin{proof}[Proof of Theorem~\ref{awcga_convergence}]
First we prove the sufficiency of 
conditions~\eqref{awcga_sum_t^p=infty}--\eqref{awcga_eta=o(t^p)}.
Assume that for some $f \in X$ the AWCGA of $f$ does not converge to $f$, 
i.e. $\lim\limits_{n \to \infty} E_n = \alpha$ for some 
$0 < \alpha \leq \n{f}$. Then for any $n \geq 0$
\begin{equation}\label{awcga_convergence_f_n_estimate}
\alpha \leq E_n \leq \n{f_n}.
\end{equation}
Take $\epsilon = \alpha/2$ and find such element $f^{\epsilon} \in X$
and number $A > 0$ that $\n{f - f^{\epsilon}} \leq \epsilon$ 
and $f^{\epsilon}/A \in A_1(\D)$. 
Let $\{n_k\}_{k=1}^{\infty}$ be a subsequence,
for which assumptions of the theorem hold.
Then by Lemma~\ref{reference_lemma_E_m<E_n}
\begin{equation*}
E_{n_k+1} \leq \n{f_{n_k}} \inf_{\lambda\geq0} 
\br{1 + \delta_{n_k} - \frac{\lambda t_{{n_k}+1}}{A} \br{1 - \delta_{n_k} 
- \frac{\beta_{n_k} + \alpha/2}{\n{f_{n_k}}}} 
+ 2\rho\br{\frac{\lambda}{\n{f_{n_k}}}}},
\end{equation*}
where 
\begin{equation*}
\beta_{n_k} = \inf_{\mu\geq0} 
\frac{\delta_{n_k} + \eta_{n_k} + 2\rho\br{\mu(2 + \eta_{n_k})\n{f}}}{\mu}.
\end{equation*}
We first estimate $\beta_{n_k}$.
Using condition $\rho(u) \leq \gamma u^q$ we obtain
\begin{equation*}
\beta_{n_k} \leq \inf_{\mu\geq0} 
\br{2\gamma\br{(2 + \eta_0)\n{f}}^q \mu^{q-1} 
+ \br{\delta_{n_k} + \eta_{n_k}}\mu^{-1}}.
\end{equation*}
Consider the real valued function $\varphi(x) = ax^{q-1} + bx^{-1}$.
Then 
$$
\inf\limits_{x\geq0} \varphi(x) 
= \varphi\br{\br{\frac{b}{a(q-1)}}^{1/q}}
= p (q-1)^{1/q} a^{1/q} b^{1/p}
$$
and therefore
\begin{equation*}
\beta_{n_k} \leq
p (2\gamma(q-1))^{1/q} (2 + \eta_0) \n{f} \br{\delta_{n_k} + \eta_{n_k}}^{1/p}
= c \br{\delta_{n_k} + \eta_{n_k}}^{1/p},
\end{equation*}
where $c = p (2\gamma(q-1))^{1/q} (2 + \eta_0) \n{f}$.
Then, applying estimates $\rho(u) \leq \gamma u^q$ 
and~\eqref{awcga_convergence_f_n_estimate}, we get
\begin{equation*}
E_{n_k+1} \leq \n{f_{n_k}} \inf_{\lambda\geq0}
\br{1 + \delta_{n_k} - \frac{\lambda t_{n_k+1}}{A} 
\br{\frac{1}{2} - \delta_{n_k} 
- \frac{c}{\alpha} \br{\delta_{n_k} + \eta_{n_k}}^{1/p}}
+ 2\gamma \frac{\lambda^q}{\alpha^q}}.
\end{equation*}
Consider the real valued function $\psi(x) = ax^q - bx$.
Then
\begin{equation*}
\inf\limits_{x\geq0} \psi(x)
= \psi\br{\br{\frac{b}{aq}}^{1/(q-1)}}
= -(q-1) q^{-p} a^{-p/q} b^p.
\end{equation*}
Thus
\begin{equation*}
E_{n_k+1} \leq \n{f_{n_k}} \br{1 + \delta_{n_k} - B_k t_{n_k+1}^p},
\end{equation*}
where $B_k = (q-1) q^{-p} (2\gamma)^{-p/q} A^{-p} 
\br{\br{\frac{1}{2} - \delta_{n_k}} \alpha 
- c \br{\delta_{n_k} + \eta_{n_k}}^{1/p}}^p$.
Hence the estimate $E_{n_{k+1}} \leq E_{n_k+1}$ and 
condition~\eqref{awcga_G_n} provide
\begin{equation*}
E_{n_{k+1}} \leq E_{n_k} (1 + \eta_{n_k}) \br{1 + \delta_{n_k} - B_k t_{n_k+1}^p}.
\end{equation*}
Note that $B_k > 0$ for sufficiently big $k$ since
conditions~\eqref{awcga_delta=o(t^p)} and~\eqref{awcga_eta=o(t^p)}
imply $\delta_{n_k} \to 0$ and $\eta_{n_k} \to 0$.
Assume without loss of generality that $B_k \geq B > 0$ for all 
$k \in \mathbb{N}$ with some constant $B > 0$.
Then
\begin{equation}\label{awcga_convergence_E_n_recursive_estimate}
E_{n_{k+1}} \leq E_{n_k} (1 + \eta_{n_k}) \br{1 + \delta_{n_k} - B t_{n_k+1}^p}.
\end{equation}
We show that $B < 1$.
It is easy to see that the definition of the modulus 
of smoothness~\eqref{modulusofsmoothness} implies
$u-1 \leq \rho(u) \leq \gamma u^q$, hence $\gamma \geq 2^{-q}$.
Then, taking into account that 
$$
A \geq \n{f^{\epsilon}} \geq \n{f} - \frac{\alpha}{2} \geq \frac{\n{f}}{2},
$$ 
we obtain 
\begin{align*}
B \leq B_k &= (q-1) q^{-p} (2\gamma)^{-p/q} A^{-p} 
\br{\br{\frac{1}{2} - \delta_{n_k}} \alpha 
- c \br{\delta_{n_k} + \eta_{n_k}}^{1/p}}^p
\\ 
&\leq (q-1) q^{-p} (2\gamma)^{-p/q} A^{-p} \br{\frac{\alpha}{2}}^p
\\
&\leq (q-1) q^{-p} (2\gamma)^{-p/q} 
< 2 \br{1 - \frac{1}{q}}
\leq 1.
\end{align*}
Therefore $1 + \delta_{n_k} - B t_{n_k+1}^p > 0$ and recursively applying 
estimate~\eqref{awcga_convergence_E_n_recursive_estimate} provides
\begin{align*}
E_{n_{k+1}} 
&\leq \n{f} \prod_{j=1}^{k} (1 + \eta_{n_j})
\br{1 + \delta_{n_j} - B t_{n_j+1}^p}
\\
&\leq \n{f} \prod_{j=1}^{k} 
\br{1 + \delta_{n_j} + 2\eta_{n_j} - B t_{n_j+1}^p} 
< \alpha
\end{align*}
for sufficiently big $k$ by assumptions
$$
\delta_{n_k} = o(t_{n_k+1}^p),
\quad
\eta_{n_k} = o(t_{n_k+1}^p),
\quad\text{and}\quad
\sm[k] t_{n_k+1}^p = \infty.
$$
Hence $E_{n_{k+1}} < \alpha$, which contradicts
the assumption $\lim\limits_{n\to\infty} E_n = \alpha$. 
Therefore $\lim\limits_{n\to\infty} E_n = 0$ and
by~\eqref{awcga_G_n}
$$
\n{f_n} \leq (1+\eta_n)E_n \leq (1+\eta_0)E_n \to 0,
$$
i.e. the AWCGA of $f$ converges to $f$.

Now we prove the necessity of conditions~\eqref{awcga_sum_t^p=infty}--\eqref{awcga_eta=o(t^p)}.
Assume that for any subsequence $\{n_k\}_{k=1}^{\infty}$ at least one of the statements
\begin{align*}
&\sm[k] t_{n_k+1}^p < \infty,\\
&\delta_{n_k} \neq o(t_{n_k+1}^p),\\
&\eta_{n_k} \neq o(t_{n_k+1}^p),
\end{align*}
holds. 
We construct an example of a Banach space, a dictionary, and an element, 
for which the AWCGA with any weakness, perturbation, and error sequences, 
satisfying the stated assumption, does not converge.\\
For a number $\alpha > 0$ define sets
$\Lambda_1 = \{n \in \mathbb{N} : \delta_{n-1} \geq \alpha t^p_n 
\text{ or } \eta_{n-1} \geq \alpha t^p_n\}$
and $\Lambda_2 = \mathbb{N} \setminus \Lambda_1$.
We claim that there exists an $\alpha > 0$ such that
\begin{equation}
\label{awcga_convergence_necessity_alpha_claim}
\sum_{j \in \Lambda_2} t^p_j < \infty.
\end{equation}
Indeed, if $\sum\limits_{j \in \Lambda_2} t^p_j = \infty$ for any $\alpha > 0$ 
then for every $k \geq 1$ choose $\alpha(k) = 2^{-k}$, 
and find $\Gamma_k \subset \Lambda_2(k)$ such that  
$\min\{n : n \in \Gamma_k\} > \max\{n : n \in \Gamma_{k-1}\}$, 
$\max\{n : n \in \Gamma_k\} < \infty$, 
and $\sum\limits_{j \in \Gamma_k} t^p_j \geq 1$.
Hence by considering the union $\bigcup\limits_{k=1}^{\infty} \Gamma_k$,
we receive the sequence for which 
conditions~\eqref{awcga_sum_t^p=infty}--\eqref{awcga_eta=o(t^p)} hold,
which contradicts the aforementioned assumption.
\\
Take any $1 < q \leq 2$ and let $X = \ell_q$, and let $\mathcal{D} = \seq[n][0] {\pm e}$,
where $\seq[n][0] e$ is a standard basis in $\ell_q$.
Fix an $\alpha > 0$ such that claim~\eqref{awcga_convergence_necessity_alpha_claim}
holds, and find corresponding sets $\Lambda_1$ and $\Lambda_2$.
\\
If $|\Lambda_1| < \infty$ then $\sm[n] t^p_n < \infty$.
In this case it is known that for $f = e_0 + \sm[j] t^{p/q}_j e_j$
the WCGA (and therefore the AWCGA) does not converge to $f$ (see Proposition~2.1 from~\cite{t_gabs}).
\\
Consider the case $|\Lambda_1| = \infty$.
Without loss of generality assume that $1 \in \Lambda_2$. 
Choose a nonnegative sequence $\{a_j\}_{j \in \Lambda_1}$ such that
$\sum\limits_{j \in \Lambda_1} a^q_j = 1$ 
and $a_j \leq \alpha^{1/q}$ for any $j \in \Lambda_1$, 
and take 
$$
f = \sum\limits_{j \in \Lambda_1} a_j e_j 
+ \alpha^{1/q} \sum\limits_{j \in \Lambda_2} t^{p/q}_j e_j.
$$
We claim that there exists a realization of the AWCGA of $f$ such that 
for any $n \geq 1$
\begin{equation}
\label{awcga_convergence_awcga_claim}
f_n = \eta_n^{1/q} e_1 + \sum\limits_{j \in \Lambda_1} a_j e_j 
+ \alpha^{1/q} \sum\limits_{j \in \Lambda_2^{(n)}} t^{p/q}_j e_j,
\end{equation}
where $\Lambda_2^{(n)} = \Lambda_2 \setminus \Gamma_n$, 
and $\Gamma_n$ is a set of indices of $e_j$ chosen on the first $n$ steps of the algorithm.
\\
\indent For $n = 1$ choose 
$$
F_0(x) = F_f(x) 
= \frac{\sum\limits_{j \in \Lambda_1} a_j^{q/p} x_j 
+ \alpha^{1/p} \sum\limits_{j \in \Lambda_2} t_j x_j}
{\n{f}_q^{q/p}}.
$$
Then
\begin{align*}
F_0(e_0) &= 0,
\\
F_0(e_j) &= a_j^{q/p} \n{f}_q^{-q/p} \leq \alpha^{1/p} \n{f}_q^{-q/p}
\text{ for any } j \in \Lambda_1,
\\
F_0(e_j) &= t_j \alpha^{1/p} \n{f}_q^{-q/p}
\text{ for any } j \in \Lambda_2.
\end{align*}
Therefore the choice $\phi_1 = e_1$ is possible since $1 \in \Lambda_2$.
Thus $\Gamma_1 = \{1\}$ and taking 
$$
f_1 = \eta_1^{1/q} e_1 + \sum\limits_{j \in \Lambda_1} a_j e_j 
+ \alpha^{1/q} \sum\limits_{j \in \Lambda_2^{(1)}} t^{p/q}_j e_j
$$
satisfies~\eqref{awcga_G_n} since 
$E_1 = \br{1 + \alpha \sum\limits_{j \in \Lambda_2^{(1)}} t^p_j}^{1/q} \geq 1$ and
\begin{align*}
\n{f_1}_q^q &= \eta_1 + \br{1 + \alpha \sum\limits_{j \in \Lambda_2^{(1)}} t^p_j}
\\
&\leq (1 + \eta_1) E_1^q
\\
&\leq (1 + \eta_1)^q E_1^q.
\end{align*}
Hence for $n = 1$ claim~\eqref{awcga_convergence_awcga_claim} holds.
\\
\indent For $n > 1$, 
provided $f_{n-1} = \eta_{n-1}^{1/q} e_1 + \sum\limits_{j \in \Lambda_1} a_j e_j 
+ \alpha^{1/q} \sum\limits_{j \in \Lambda_2^{(n-1)}} t^{p/q}_j e_j$, choosing 
$$
F_{n-1}(x) = 
\frac{\delta_{n-1}^{1/p} x_0 + \eta_{n-1}^{1/p} x_1 
+ \sum\limits_{j \in \Lambda_1} a_j^{q/p} x_j 
+ \alpha^{1/p} \sum\limits_{j \in \Lambda_2^{(n-1)}} t_j x_j}
{(1 + \delta_{n-1})^{1/p} \n{f_{n-1}}_q^{q/p}}
$$
satisfies~\eqref{awcga_F_n} since 
$$
|F_{n-1}(x)| \leq \frac{\br{\delta_{n-1} + \eta_{n-1} 
+ \sum\limits_{j \in \Lambda_1} a_j^q 
+ \alpha \sum\limits_{j \in \Lambda_2^{(n-1)}} t_j^p}^{1/p} 
\br{\sm[j][0] x_j^q}^{1/q}}
{(1 + \delta_{n-1})^{1/p} \n{f_{n-1}}_q^{q/p}}
\leq \n{x}_q
$$
and
$$
F_{n-1}(f_{n-1}) = \frac{\eta_{n-1} 
+ \sum\limits_{j \in \Lambda_1} a_j^q 
+ \alpha \sum\limits_{j \in \Lambda_2^{(n-1)}} t_j^p}
{(1 + \delta_{n-1})^{1/p} \n{f_{n-1}}_q^{q/p}} 
= \frac{\n{f_{n-1}}_q}{(1 + \delta_{n-1})^{1/p}}
\geq (1 - \delta_{n-1}) \n{f_{n-1}}_q,
$$
where the last inequality uses the estimate
$$
(1 + \delta_{n-1})^{1/p} (1 - \delta_{n-1})  
= \br{1 - \delta^2_{n-1}}^{1/p} \br{1 - \delta_{n-1}}^{1/q} 
\leq 1.
$$
Hence such choice of a functional is possible.
Let $A_n = \br{(1 + \delta_{n-1}) \n{f_{n-1}}_q^q}^{-1/p}$.
Then
\begin{align*}
F_{n-1}(e_0) &= \delta_{n-1}^{1/p} A_n,
\\
F_{n-1}(e_1) &= \eta_{n-1}^{1/p} A_n,
\\
F_{n-1}(e_j) &= a_j^{q/p} A_n \leq \alpha^{1/p} A_n
\text{ for any } j \in \Lambda_1,
\\
F_{n-1}(e_j) &= t_j \alpha^{1/p} A_n
\text{ for any } j \in \Lambda_2^{(n-1)},
\\
F_{n-1}(e_j) &= 0
\text{ for any } j \in \Gamma_{n-1} \setminus \{1\}.
\end{align*}
If $n \in \Lambda_2$ we choose $\phi_n = e_n$.
Otherwise $n \in \Lambda_1$, and either 
$$
F_{n-1}(e_0) \geq t_n \alpha^{1/p} A_n
\quad\text{and/or}\quad
F_{n-1}(e_1) \geq t_n \alpha^{1/p} A_n.
$$
Therefore it is possible to choose $\phi_n = e_0$ or $\phi_n = e_1$.
In any case $\Gamma_n \cap \Lambda_1 = \varnothing$ and taking 
$$
f_n = \eta_n^{1/q} e_1 + \sum\limits_{j \in \Lambda_1} a_j e_j 
+ \alpha^{1/q} \sum\limits_{j \in \Lambda_2^{(n)}} t^{p/q}_j e_j
$$
satisfies~\eqref{awcga_G_n} since 
$E_n = \br{1 + \alpha \sum\limits_{j \in \Lambda_2^{(n)}} t^p_j}^{1/q} \geq 1$ and
\begin{align*}
\n{f_n}_q^q &= \eta_1 + \br{1 + \alpha \sum\limits_{j \in \Lambda_2^{(n)}} t^p_j}
\\
&\leq (1 + \eta_n) E_n^q
\\
&\leq (1 + \eta_n)^q E_n^q.
\end{align*}
Hence claim~\eqref{awcga_convergence_awcga_claim} holds for any $n > 1$.
Thus $\n{f_n} \not\to 0$ and the AWCGA does not converge to $f$.
\end{proof}

The next corollary gives the particular rates for the weakness, 
perturbation, and error sequences, which are sufficient for convergence.

\begin{corollary}
Let $X \in \mathcal{P}_q$ be a Banach space and $\D$ be any dictionary.
Let $\seq[n]{t}$, 
$\seq[n][0]{\delta}$, and $\seq[n]{\eta}$ be any such sequences that
for some subsequence $\seq[k]{n}$
$$
t_{n_k} \asymp k^{-1/p},
\quad
\delta_{n_k} \asymp k^{-r},
\quad\text{and}\quad
\eta_{n_k} \asymp k^{-s},
$$
where $p = q/(q-1)$ and $r, s > 1$.
Then the AWCGA with the weakness sequence $\seq[n]{t}$, the perturbation
sequence $\seq[n][0]{\delta}$, and the bounded error sequence 
$\seq[n]{\eta}$ converges for any $f \in X$.
\end{corollary}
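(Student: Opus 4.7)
The plan is to apply Theorem~\ref{awcga_convergence} with the subsequence $\seq[k]{n}$ provided in the hypothesis; the whole argument reduces to checking the three conditions~\eqref{awcga_sum_t^p=infty}--\eqref{awcga_eta=o(t^p)} directly from the prescribed polynomial rates. From $t_{n_k} \asymp k^{-1/p}$ I would immediately compute $t_{n_k}^p \asymp k^{-1}$, so $\sm[k] t_{n_k}^p$ diverges at the rate of the harmonic series, delivering condition~\eqref{awcga_sum_t^p=infty}. Since $r,s > 1$, the ratios
$$
\frac{\delta_{n_k}}{t_{n_k}^p} \asymp k^{-(r-1)}
\quad\text{and}\quad
\frac{\eta_{n_k}}{t_{n_k}^p} \asymp k^{-(s-1)}
$$
both tend to zero, giving $\delta_{n_k} = o(t_{n_k}^p)$ and $\eta_{n_k} = o(t_{n_k}^p)$, which are conditions~\eqref{awcga_delta=o(t^p)} and~\eqref{awcga_eta=o(t^p)}.

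The only point requiring a little care is the indexing offset between $t_{n_k}$ in the hypothesis and $t_{n_k+1}$ in Theorem~\ref{awcga_convergence}. I would absorb this by passing to the shifted subsequence $\{n_k - 1\}_{k}$ (discarding the initial term if $n_1 = 1$), so that $t_{(n_k - 1)+1} = t_{n_k}$ and the three conditions of the theorem translate exactly into the asymptotic estimates above. Once all three conditions are verified in this form, Theorem~\ref{awcga_convergence} yields convergence of the AWCGA for every $f \in X$.

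I do not anticipate any serious obstacle: the corollary is essentially a polynomial-rate tabulation, and the computation boils down to comparing the exponents of $k$ supplied by the hypothesis against the critical exponent $-1$ coming from $t_{n_k}^p$. The only mild technical point is the bookkeeping for the index shift, which has no effect on the exponent comparisons and therefore does not obstruct the direct appeal to Theorem~\ref{awcga_convergence}.
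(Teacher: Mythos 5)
Your exponent calculations are fine, and you rightly flag the index offset between $t_{n_k}$ and $t_{n_k+1}$ as the only nontrivial point, but the fix you propose does not actually close the gap. If you pass to the shifted subsequence $m_k = n_k - 1$, then indeed $t_{m_k+1} = t_{n_k}$ and condition~\eqref{awcga_sum_t^p=infty} follows, but Theorem~\ref{awcga_convergence} also requires $\delta_{m_k} = o(t_{m_k+1}^p)$ and $\eta_{m_k} = o(t_{m_k+1}^p)$, i.e.\ $\delta_{n_k-1} = o(t_{n_k}^p)$ and $\eta_{n_k-1} = o(t_{n_k}^p)$. The hypotheses of the corollary control $\delta_{n_k}$ and $\eta_{n_k}$, not $\delta_{n_k-1}$ and $\eta_{n_k-1}$, so these two conditions cannot be verified from the given rates. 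The shift repairs the $t$-index at the cost of breaking the $\delta$- and $\eta$-indices.

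In fact, as literally printed the corollary is not a consequence of Theorem~\ref{awcga_convergence} and is even false: take $n_k = 2k$ with $t_{2k} \asymp k^{-1/p}$, $\delta_{2k} \asymp k^{-r}$, $\eta_{2k} \asymp k^{-s}$ on the even indices, while $t_{2k+1} = 0$ and $\delta_{2k+1} = \eta_{2k+1} = 1$ on the odd ones. Every subsequence $\{m_j\}$ then either contains infinitely many odd terms, on which $\delta_{m_j} = 1 \neq o(t_{m_j+1}^p)$, or is eventually even, so that $t_{m_j+1} = 0$ and $\sum_j t_{m_j+1}^p < \infty$; either way the hypotheses of Theorem~\ref{awcga_convergence} fail along every subsequence, and by the necessity direction the AWCGA diverges for some $X$, $\D$, $f$. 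The corollary is evidently intended to read $t_{n_k+1} \asymp k^{-1/p}$ --- the indexing used in Theorem~\ref{awcga_rate} and its two corollaries --- and with that reading your first two paragraphs apply verbatim with the subsequence $\{n_k\}$ itself and no shift is needed.
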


We now justify the restriction imposed on an error sequence in 
Theorem~\ref{awcga_convergence}.
The following proposition demonstrates that if an error sequence is
unbounded, then for any $1 < q \leq 2$ there exists 
a dictionary $\D$ and an element $f$ in $\ell_q$ such that
the AWCGA of $f$ diverges.

\begin{proposition}
For any $\seq[n]{\eta}$ with $\limsup\limits_{n\to\infty} \eta_n = \infty$
and any $1 < q \leq 2$ there exists a dictionary $\D$ in $\ell_q$ and 
an element $f \in \ell_q$ such that AWCGA of $f$ 
with any weakness sequence $\seq[n]{t}$, 
any perturbation sequence $\seq[n][0]{\delta}$, 
and the error sequence $\seq[n]{\eta}$ does not converge to $f$.
\end{proposition}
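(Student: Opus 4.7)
The plan is to construct, for the given $\seq[n]{\eta}$ with $\limsup_{n\to\infty} \eta_n = \infty$, an adversarial example in $\ell_q$ with the standard basis dictionary $\D = \{\pm e_j\}_{j \geq 1}$. I would first extract an increasing subsequence $\seq[k]{n}$ along which $\eta_{n_k} \to \infty$, and then construct an element $f = \sum_{j \geq 1} c_j e_j \in \ell_q$ with positive decreasing coefficients $c_j$ tuned to the growth of $\eta_{n_k}$. The key mechanism is that when $\eta_n$ is large, condition~\eqref{awcga_G_n} allows $G_n$ to be chosen so that $\n{f_n}$ equals $(1+\eta_n)E_n$ rather than the much smaller $E_n$; the coefficients $c_j$ would be chosen so that the tail $\tau_N = \sum_{j > N} c_j^q$ decreases slowly enough (possibly at an iterated-logarithmic rate) that $(1 + \eta_{n_k}) \tau_{k(n_k)}^{1/q}$ remains bounded below by a positive constant along the subsequence, where $k(n_k)$ denotes the dimension of $\Phi_{n_k}$ in the realization I construct.

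I would describe the realization step by step, handling the worst case $t_n = 1$ and $\delta_{n-1} = 0$, which forces $F_{n-1}$ to be the unique norming functional of $f_{n-1}$ and $\phi_n$ to be an exact maximizer of $F_{n-1}$ on $\D$. On exploit steps $n = n_k$, the adversary picks $G_{n_k} \in \Phi_{n_k}$ saturating~\eqref{awcga_G_n}, so that $\n{f_{n_k}} = (1+\eta_{n_k}) E_{n_k}$ is large. On ordinary steps, $G_n$ is chosen to manipulate the components of $f_n$ in such a way that the forced choice of $\phi_{n+1}$ at the next step lies in a previously picked direction whenever possible, keeping the stage index $k(n)$ small relative to $n$. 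Relaxing to $t_n < 1$ or $\delta_{n-1} > 0$ only gives the adversary additional freedom in choosing $\phi_n$ and $F_{n-1}$, so the argument only gets easier.

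The hard part will be the joint tuning: selecting $c_j$ so that $f \in \ell_q$ (i.e., $\sum c_j^q < \infty$) while simultaneously arranging that $\tau_{k(n_k)}$ stays comparable to $(1+\eta_{n_k})^{-q}$ for every $k$, and verifying that the realization is genuinely consistent with the forced constraints on $F_{n-1}$ and $\phi_n$ at every step. I expect this to require a case analysis on whether $\liminf_{n\to\infty} \eta_n$ is zero or positive. In the positive case, there is a fixed threshold above which $\Phi_n$ can be kept essentially constant by always picking an already-selected basis element, so any subsequence with $\eta_{n_k}$ above the threshold suffices and the tail $\tau$ stays uniformly positive. In the zero case, one cannot prevent $\Phi_n$ from growing at the small-$\eta_n$ steps, so the more delicate matching of slowly decaying $c_j$ to the sparse subsequence $\{n_k\}$ is needed; once this matching is set up, the saturation identity $\n{f_{n_k}} = (1+\eta_{n_k}) E_{n_k} \geq c_0 > 0$ along the subsequence contradicts $\n{f_n} \to 0$ and shows that the AWCGA diverges.
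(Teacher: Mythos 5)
Your proposal is headed in the same general direction as the paper's proof (standard basis of $\ell_q$ as the dictionary, an element $f = \sum_j a_j e_j$ with slowly decaying positive coefficients, a subsequence $\{n_k\}$ along which $\eta_{n_k}\to\infty$, and ``exploit'' steps at $n=n_k$), but it misses the simplification that makes the paper's argument short. On the exploit steps the paper does not saturate~\eqref{awcga_G_n}; it simply takes $G_{n_k}=0$, so that $f_{n_k}=f$ and $\n{f_{n_k}}=\n{f}=1$ for every $k$. Divergence is then immediate, with no need to track a saturated remainder norm, and the \emph{only} thing to verify is that $G_{n_k}=0$ is admissible, i.e.\ $\n{f}\le(1+\eta_{n_k})E_{n_k}$. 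Since $\Phi_{n_k}\subset\mathrm{span}\{e_1,\dots,e_{n_k}\}$, one has $E_{n_k}\ge\bigl(\sum_{j>n_k}a_j^q\bigr)^{1/q}$, so it suffices to pick the subsequence with $\eta_{n_k}\ge k$ and the coefficients with $\bigl(\sum_{j>n_k}a_j^q\bigr)^{1/q}\ge(k+1)^{-1}$; no case analysis on $\liminf\eta_n$ and no delicate joint tuning is required. By contrast, the saturation route you sketch would make $\n{f_{n_k}}=(1+\eta_{n_k})E_{n_k}$, a potentially huge quantity; the remainder $f_{n_k}$ is then a new element whose norming functional and forced $\phi_{n_k+1}$ must be re-analyzed at every exploit step, which is exactly the ``hard part'' you flag and never close.

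Your concern about the forced choices when $t_n=1$ and $\delta_{n-1}=0$ is legitimate and is in fact a point where the paper's own realization is stated loosely: after an exploit step $f_{n_k}=f$, so the norming functional of $f_{n_k}$ is maximized over $\D$ at $e_1$, not $e_{n_k+1}$, and the paper's ``choose $\phi_n=e_n$'' cannot literally hold at step $n_k+1$ in that corner case. The fix is routine: accept the forced $\phi_{n_k+1}=e_1\in\Phi_{n_k}$, which keeps $\Phi_n$ from growing and only makes $E_n$ (and hence the admissibility margin for the next exploit) larger. So this subtlety is real but harmless once the $G_{n_k}=0$ idea is in hand; in your saturation framework it becomes a genuine obstruction, because the post-exploit remainder is no longer simply $f$.
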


\begin{proof}
Let $\D = \seq[n]{\pm e}$, 
where $\seq[n]{e}$ is a standard basis in $\ell_q$.
Choose any weakness sequence $\seq[n]{t}$,
a perturbation sequence $\seq[n][0]{\delta}$ and 
find a subsequence $\seq[k]{n}$
such that $\eta_{n_k} \geq k$ for any $k \geq 1$.
Take a positive nonincreasing sequence $\seq[j]{a} \in \ell_q$ 
such that $\sm a_j^q = 1$ and
$$
\br{\sm[j][n_k+1][\infty] a_j^q}^{1/q} \geq (k+1)^{-1}
$$
for any $k \geq 1$.
Denote $f = \sm a_j e_j \in \ell_q$.
\\
Consider the following realization of the AWCGA of $f$:
\\
\indent For $n \not\in \seq[k]{n}$ choose $\phi_n = e_n$ 
and $G_n = \sm[j][1][n] a_j e_j$.
\\
\indent For $n \in \seq[k]{n}$ choose $\phi_n = e_n$
and $G_n = 0$.
\\
Then for any $k \geq 1$ norm of the remainder 
$\n{f_{n_k}} = \n{f}$, hence $\n{f_n} \not\to 0$.
\end{proof}

The following lemma is used to prove Theorem~\ref{awcga_convergence_eta_delta_in_l_1}.

\begin{lemma}\label{lemma_sequences_l_1}
Let $\seq[n]{a}$ and $\seq[n]{b}$ be any such nonnegative sequences that
$$
\sm[n] a_n < \infty
\quad\text{and}\quad
\sm[n] b_n = \infty.
$$
Then there exists a subsequence $\seq[k]{n}$ such that
$$
\sm[k] b_{n_k} = \infty
\quad\text{and}\quad
a_{n_k} = o(b_{n_k}).
$$
\end{lemma}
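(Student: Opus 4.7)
The plan is to exploit $\sm[n] a_n < \infty$ by showing that the indices where the ratio $a_n/b_n$ fails to be small carry only finite $b$-mass, so that infinite $b$-mass remains on the complement from which the desired subsequence can be extracted.

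First I would introduce, for each $k \in \mathbb{N}$, the ``bad'' set $\Lambda_k = \{n \in \mathbb{N} : a_n > b_n/k\}$. On $\Lambda_k$ one has $b_n < k a_n$, and summing gives
\[
\sum_{n \in \Lambda_k} b_n \leq k \sm[n] a_n < \infty.
\]
Combined with $\sm[n] b_n = \infty$, this forces $\sum_{n \notin \Lambda_k} b_n = \infty$, and the same holds for every tail $\{n \notin \Lambda_k : n \geq N\}$ for any $N$.

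Next I would inductively construct disjoint finite blocks $\Gamma_k \subset \mathbb{N}\setminus\Lambda_k$ with $\min \Gamma_k > \max \Gamma_{k-1}$ and $\sum_{n \in \Gamma_k} b_n \geq 1$; the tail observation above guarantees that such finite blocks exist at each step. Defining $\seq[j]{n}$ as the increasing enumeration of $\bigcup_k \Gamma_k$, one obtains
\[
\sm[j] b_{n_j} = \sm[k][1] \sum_{n \in \Gamma_k} b_n \geq \sm[k][1] 1 = \infty,
\]
and each $n_j$ lies in some $\Gamma_{k(j)}$ with $k(j) \to \infty$ (since every $\Gamma_k$ is finite). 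Because $n_j \notin \Lambda_{k(j)}$, it follows that $a_{n_j} \leq b_{n_j}/k(j)$, so $a_{n_j}/b_{n_j} \to 0$ and $a_{n_j} = o(b_{n_j})$.

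The only subtlety is the possible presence of indices with $b_n = 0$, which would make the ratio ill-defined; I would resolve this by restricting the construction to $\{n : b_n > 0\}$ throughout, which does not change any of the $b$-sums and therefore does not affect the argument. The substantive step is the estimate $\sum_{n \in \Lambda_k} b_n \leq k \sm[n] a_n$, which is the only place where the $\ell_1$ hypothesis on $\seq[n]{a}$ enters; the rest is the same finite-block bookkeeping used in the earlier lemma of this section.
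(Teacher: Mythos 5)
Your proof is correct, and it takes a genuinely different route from the paper's. Both arguments rest on the same key estimate (if $a_n > b_n/k$ on a set then the $b$-mass of that set is at most $k\sum a_n < \infty$), but they organize the extraction differently. The paper first separates off $\{a_n=0\}$ and $\{a_n/b_n>1\text{ or }b_n=0\}$, then \emph{partitions} the remaining indices into level sets $\Lambda_k = \{n : 1/(k+1) < a_n/b_n \leq 1/k\}$, notes that each $\Lambda_k$ carries only finite $b$-mass while $\bigcup_k \Lambda_k = \mathbb{N}$ forces the total to diverge, and extracts a finite $\Omega_k \subset \Lambda_k$ capturing at least half the $b$-mass of $\Lambda_k$; divergence of $\sum b_{n_k}$ then comes from the identity $\sum_k \sum_{\Lambda_k} b_\lambda = \infty$, and the $o$-estimate from the fact that all but finitely many chosen indices lie in $\Lambda_k$ with $k$ large. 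You instead use the \emph{nested} bad sets $\Lambda_k = \{n : a_n > b_n/k\}$, observe each has finite $b$-mass, and carve out disjoint finite blocks $\Gamma_k \subset \mathbb{N}\setminus\Lambda_k$ of $b$-mass at least $1$ from ever deeper tails; divergence is immediate from $\sum_k 1 = \infty$, and the $o$-estimate follows because any chosen index in $\Gamma_k$ avoids $\Lambda_k$. Your version is a bit more streamlined: it avoids the separate treatment of the degenerate sets (handled cleanly by restricting to $\{b_n>0\}$ at the outset) and avoids the half-mass bookkeeping, getting divergence for free from the fixed lower bound $1$ per block. Both are valid; the paper's partition-by-level structure is perhaps more directly parallel to the first lemma of that section, while yours is the more elementary block construction.
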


\begin{proof}
Take $\Gamma_0 \subset \mathbb{N}$ -- the set of all such indices 
$\lambda \in \mathbb{N}$ that $a_{\lambda} = 0$.
If $\sum\limits_{\lambda\in\Gamma_0} b_{\lambda} = \infty$,
then $|\Gamma_0| = \infty$, and $\seq[k]{n} = \Gamma_0$
is a required subsequence.\\
Consider the case $\sum\limits_{\lambda\in\Gamma_0} b_{\lambda} < \infty$.
Take $\Gamma_1 \subset \mathbb{N}$ -- the set of all such indices 
$\lambda \in \mathbb{N}$ that $\frac{a_{\lambda}}{b_{\lambda}} > 1$ 
or $b_{\lambda} = 0$. Therefore 
$\sum\limits_{\lambda\in\Gamma_1} b_{\lambda}
< \sum\limits_{\lambda\in\Gamma_1} a_{\lambda}  
\leq \sm[n] a_n < \infty$.\\
Set $\Lambda_0 = \Gamma_0 \cup \Gamma_1$. Then 
$$
\sum\limits_{\lambda\in\Lambda_0} b_{\lambda} \leq 
\sum\limits_{\lambda\in\Gamma_0} b_{\lambda}
+ \sum\limits_{\lambda\in\Gamma_1} b_{\lambda}
< \infty.
$$
For each $k \in \mathbb{N}$ denote by 
$\Lambda_k \subset \mathbb{N}\setminus\Lambda_0$ the set 
of all such indices $\lambda \in \mathbb{N}$ that
\begin{equation}\label{definition_Lambda_k}
\frac{1}{k+1} < \frac{a_{\lambda}}{b_{\lambda}} \leq \frac{1}{k}.
\end{equation}
Note that for any $k \in \mathbb{N}$
$$
\sum\limits_{\lambda\in\Lambda_k} b_{\lambda}
< (k+1) \sum\limits_{\lambda\in\Lambda_k} a_{\lambda}
\leq (k+1) \sm[n] a_n < \infty,
$$
which implies that infinitely many sets $\Lambda_k$ are not empty 
and $\mathbb{N} = \bigcup\limits_{k=0}^{\infty} \Lambda_k$.
Thus
\begin{equation}\label{Lambda_sum_b=infty}
\sm[k]\sum_{\lambda\in\Lambda_k} b_{\lambda}^p = \infty.
\end{equation}
For each $k \in \mathbb{N}$ choose $\Omega_k \subset \Lambda_k$ such that
$$
|\Omega_k| < \infty
\quad\text{and}\quad
\sum_{\lambda\in\Omega_k} b_{\lambda}
\geq \frac{1}{2} \sum_{\lambda\in\Lambda_k} b_{\lambda}
$$
and set $\omega_k = \max\limits_{\lambda\in\mathbb{N}}
{\{\lambda \in \Omega_k\}} < \infty$.\\
We claim that subsequence $\seq[k]{n} = \bigcup\limits_{k=1}^{\infty} \Omega_k$
satisfies the requirements.
Indeed, by the choice of $\Omega_k$ and from~\eqref{Lambda_sum_b=infty}
$$
\sm[k] b_{n_k} 
= \sm[k]\sum_{\lambda\in\Omega_k} b_{\lambda} 
\geq \frac{1}{2} \sm[k]\sum_{\lambda\in\Lambda_k} b_{\lambda}
= \infty,
$$
hence the first requirement is satisfied.
We show that $a_{n_k} = o(b_{n_k})$.
Choose any $0 < \epsilon < 1$ and find $m \in \mathbb{N}$ such that
$$
\frac{1}{m+1} < \epsilon \leq \frac{1}{m}.
$$
Then for any $k > \max\limits_{1 \leq j \leq m} \{\omega_j\}$ 
by~\eqref{definition_Lambda_k}
$$
\frac{a_{n_k}}{b_{n_k}} 
\leq \frac{1}{m+1}
< \epsilon,
$$
hence the second requirement is satisfied.
\end{proof}

\section{Rate of convergence of the AWCGA}\label{rate_of_convergence_of_the_awcga}

In conclusion, we discuss the rate of convergence of the AWCGA.
It is clear that in order to get a nontrivial rate of approximation,
an additional requirement has to be imposed on an element.
Traditionally for this area, we restrict to the elements 
from the class $A_1(\D)$~-- the closure of the convex hull of $\D$.
We start with the known result for the rate of convergence 
of the WCGA (see Theorem~2.2 from~\cite{t_gabs}).

\begin{referencetheorem}\label{t_wcga_rate}
Let $X \in \mathcal{P}_q$ be a Banach space and $\D$ be any dictionary. 
Let $\seq[n]{t}$ be a weakness sequence.
Then for any $f \in A_1(\D)$ the AWCGA satisfies the estimate
\begin{equation}\label{t_wcga_rate_estimate}
\n{f_n} \leq C \br{1 + \sm[k][1][n] t_k^p}^{-1/p},
\end{equation}
where $p = q/(q-1)$ and $C = C(q, \gamma)$.
\end{referencetheorem}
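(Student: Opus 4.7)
The plan is to derive a one-step recursive bound for $E_n$ via Lemma~\ref{reference_lemma_E_m<E_n} and then invert it into the desired polynomial rate by the standard Temlyakov-style trick. Since $f \in A_1(\D)$, I apply Lemma~\ref{reference_lemma_E_m<E_n} with $f^{\epsilon} = f$, $\epsilon = 0$, and $A = 1$, obtaining
\begin{equation*}
E_{n+1} \leq \n{f_n} \inf_{\lambda \geq 0} \br{1 + \delta_n - \lambda t_{n+1}\br{1 - \delta_n - \frac{\beta_n}{\n{f_n}}} + 2\rho\br{\frac{\lambda}{\n{f_n}}}},
\end{equation*}
with $\beta_n$ as in the lemma. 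The power-type assumption $\rho(u) \leq \gamma u^q$ together with the $\varphi(x) = ax^{q-1} + bx^{-1}$ minimization already used in the proof of Theorem~\ref{awcga_convergence} yields $\beta_n \leq c(\delta_n + \eta_n)^{1/p}$ for a constant $c$ depending on $q$, $\gamma$, $\eta_0$, and $\n{f}$.

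The second step is to carry out the $\lambda$-optimization explicitly. Setting $c_n := 1 - \delta_n - \beta_n/\n{f_n}$ and substituting $\rho(u) \leq \gamma u^q$, the inner problem reduces to minimizing $\psi(\lambda) = 2\gamma\lambda^q \n{f_n}^{-q} - \lambda t_{n+1} c_n$. Applying the $\inf_x(ax^q - bx) = -(q-1)q^{-p} a^{-p/q} b^p$ formula (also used in the proof of Theorem~\ref{awcga_convergence}) gives the one-step inequality
\begin{equation*}
E_{n+1} \leq \n{f_n}\br{1 + \delta_n - K_q\, c_n^p\, \n{f_n}^p\, t_{n+1}^p}
\end{equation*}
for an explicit constant $K_q = K_q(q, \gamma)$.

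The third step is the inversion. Since $f \in A_1(\D)$ forces $\n{f} \leq 1$, and hence $E_n \leq \n{f_n} \leq (1+\eta_n)E_n \leq 1 + \eta_0$, combining the one-step bound with the elementary estimate $(1 - x)^{-p} \geq 1 + px$ for $x \in [0,1]$ converts it into
\begin{equation*}
E_{n+1}^{-p} \geq E_n^{-p}\br{1 + K'_q\, c_n^p\, t_{n+1}^p} - R_n,
\end{equation*}
where $R_n$ collects the corrections arising from the factors $(1+\eta_n)$ and $(1+\delta_n)$. Telescoping from $k = 0$ to $n-1$ and using $E_0 = \n{f} \leq 1$ yields $E_n^{-p} \gtrsim 1 + \sum_{k=1}^n t_k^p$, whence $\n{f_n} \leq (1+\eta_0) E_n \leq C(1 + \sum_{k=1}^n t_k^p)^{-1/p}$.

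The hardest part will be absorbing the factors $c_n$ and the accumulated error $\sum R_k$ into a single constant depending only on $q$ and $\gamma$: a uniform lower bound $c_n \geq c_0 > 0$ is needed so that the $c_n^p$ factor does not degrade the recursion, and the telescoping must survive the $(1+\eta_n)(1+\delta_n)$ multiplicative corrections. In the pure WCGA limit $\delta_n \equiv \eta_n \equiv 0$ one has $c_n \equiv 1$ and $R_n \equiv 0$, and the argument reduces to the classical Temlyakov estimate; for the full AWCGA, these corrections are controlled by the smallness of $\delta_n$ and $\eta_n$ built implicitly into the statement via the constant $C$.
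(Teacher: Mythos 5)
Your plan (Lemma~\ref{reference_lemma_E_m<E_n} with $\epsilon=0$, $A=1$, the two explicit one-variable minimizations, then inversion of a one-step bound by telescoping $E_n^{-p}$) is exactly the technique of Temlyakov's proof of Theorem~\ref{t_wcga_rate} in~\cite{t_gabs} and of the paper's own Theorem~\ref{awcga_rate}; note the paper does not reprove Theorem~\ref{t_wcga_rate} but cites it. The genuine gap is in your last step, and it comes from taking the word ``AWCGA'' in the statement literally. To run the telescoping you need $c_n = 1-\delta_n-\beta_n/\n{f_n}$ bounded below by a positive constant and the multiplicative corrections from $(1+\delta_n)$ and $(1+\eta_n)$ to be absorbable, and nothing in the hypotheses gives you this: your closing assertion that the smallness of $\delta_n,\eta_n$ is ``built implicitly into the statement via the constant $C$'' is untenable, since $C=C(q,\gamma)$ carries no information about the perturbation and error sequences. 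Indeed, for a general AWCGA the estimate~\eqref{t_wcga_rate_estimate} with such a constant is false: if $\delta_n\equiv1$, condition~\eqref{awcga_F_n} permits $F_{n-1}\equiv0$, so $\phi_n$ may be the same dictionary element at every step and $E_n$ need not decay at all; large $\eta_n$ similarly permits $G_n=0$. This is precisely why the paper's AWCGA rate results (Theorems~\ref{t_awcga_rate} and~\ref{awcga_rate}) impose the adaptive choices $\delta_n\asymp t_{n+1}^p\n{f_n}^p$ and $\eta_n\asymp t_{n+1}^pE_n^p$; those choices are exactly what make your ``hardest part'' go through, guaranteeing $c_n\ge c_0>0$ and letting the $(1+\eta_n)$, $(1+\delta_n)$ factors be swallowed by the negative term.

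Under the intended reading of Theorem~\ref{t_wcga_rate} (it is the WCGA result of~\cite{t_gabs}, i.e.\ $\delta_n\equiv\eta_n\equiv0$), your argument closes correctly and coincides with the classical one: then $\beta_n=0$ (let $\mu\to0$ and use $\rho(u)=o(u)$), $c_n\equiv1$, $\n{f_n}=E_n$, and the one-step bound becomes $E_{n+1}\le E_n\br{1-K_qt_{n+1}^pE_n^p}$ with $K_q=(q-1)q^{-p}(2\gamma)^{-p/q}$, which inverts to $E_n^{-p}\gtrsim 1+\sm[k][1][n]t_k^p$ since $E_0\le1$. Two details you should still make explicit: the bracket $1-K_qt_{n+1}^pE_n^p$ must be shown nonnegative before the inversion (the paper checks $\gamma\ge2^{-q}$, hence $K_q<1$, and uses $E_n\le1$, $t_{n+1}\le1$), and your inequality $(1-x)^{-p}\ge1+px$ is fine for $0\le x<1$, though the paper's route of first passing to $E^p$ via $x^p\le x$ and then using $(1-x)^{-1}\ge1+x$ is marginally cleaner.
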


The next result states the rate of convergence of an adaptive AWCGA,
where adaptive means that perturbation and error sequences are 
determined by the AWCGA applied to a given element $f \in A_1(\D)$ 
(see Theorem~2.4 from~\cite{t_gtabsa}). 

\begin{referencetheorem}\label{t_awcga_rate}
Let $X \in \mathcal{P}_q$ be a Banach space and $\D$ be any dictionary.
Let $\seq[n]{t}$ be a weakness sequence.
Then for any $f \in A_1(\D)$ the AWCGA
with the perturbation sequence $\seq[n][0]{\delta}$ and
the error sequence $\seq[n]{\eta}$, which are given by
\begin{align}
\label{t_awcga_rate_delta_condition}
\delta_n &= t_{n+1}^p \n{f_n}^p 3^{-p} 
\br{64\br{8\gamma}^{p/q}}^{-1},
\quad n \geq 0;
\\
\label{t_awcga_rate_eta_condition}
\eta_n &= t_{n+1}^p E_n^p 3^{-p} 
\br{64\br{8\gamma}^{p/q}}^{-1},
\quad n \geq 1,
\end{align}
where $p = q/(q-1)$, satisfies the estimate
\begin{equation*}
\n{f_n} \leq C \br{1 + \sm[k][1][n] t_k^p}^{-1/p},
\end{equation*}
where $C = C(q, \gamma)$.
\end{referencetheorem}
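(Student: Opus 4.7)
The plan is to apply Reference Lemma~\ref{reference_lemma_E_m<E_n} with $\epsilon = 0$, $f^{\epsilon} = f$, and $A = 1$, which is legitimate since $f \in A_1(\D)$ implies $\n{f} \leq 1$ and $f/1 \in A_1(\D)$. This gives for every $n \geq 0$
\[
E_{n+1} \leq \n{f_n} \inf_{\lambda \geq 0} \br{1 + \delta_n - \lambda t_{n+1}\br{1 - \delta_n - \frac{\beta_n}{\n{f_n}}} + 2\rho\br{\frac{\lambda}{\n{f_n}}}},
\]
and the goal reduces to estimating $\beta_n$, optimizing over $\lambda$, and then iterating the resulting recursion.

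First I would estimate $\beta_n$ exactly as in the proof of Theorem~\ref{awcga_convergence}: using $\rho(u) \leq \gamma u^q$ together with the scalar minimization $\inf_{x \geq 0}\br{ax^{q-1} + bx^{-1}} = p(q-1)^{1/q} a^{1/q} b^{1/p}$ gives $\beta_n \leq c_1 \br{\delta_n + \eta_n}^{1/p}$ for some $c_1 = c_1(q,\gamma,\n{f})$. Substituting the explicit definitions of $\delta_n$ and $\eta_n$, together with $E_n \leq \n{f_n}$ and $\n{f} \leq 1$, produces $\beta_n \leq c_2 t_{n+1} \n{f_n}$ with some $c_2 = c_2(q,\gamma)$. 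The specific numerical constants $3^{-p}\br{64(8\gamma)^{p/q}}^{-1}$ in the hypotheses are calibrated so that $\delta_n + \beta_n/\n{f_n} \leq 1/3$, keeping the factor $1 - \delta_n - \beta_n/\n{f_n}$ uniformly bounded below by $2/3$.

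Next I would optimize over $\lambda$ using $\rho(u) \leq \gamma u^q$ and the minimization $\inf_{x \geq 0}\br{ax^q - bx} = -(q-1)q^{-p} a^{-p/q} b^p$ with $a = 2\gamma \n{f_n}^{-q}$ and $b = t_{n+1}(1 - \delta_n - \beta_n/\n{f_n})$. This yields
\[
E_{n+1} \leq \n{f_n} \br{1 + \delta_n - c_3 t_{n+1}^p \n{f_n}^p}
\]
with some $c_3 = c_3(q,\gamma) > 0$. Since $\delta_n$ is a small fraction of $t_{n+1}^p \n{f_n}^p$, the additive term can be absorbed into the leading negative one; combining this with $E_n \leq \n{f_n} \leq (1 + \eta_n) E_n$ and the fact that $\eta_n$ is likewise a small fraction of $t_{n+1}^p E_n^p$ then gives a clean recursion
\[
E_{n+1} \leq E_n \br{1 - c_5 t_{n+1}^p E_n^p}
\]
for some $c_5 = c_5(q,\gamma) > 0$.

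Finally, I would convert this recursion into the advertised rate by a standard argument: raising both sides to the $p$-th power, taking reciprocals, and using $(1-y)^{-p} \geq 1 + py$ for $y \in [0,1)$ and $p \geq 1$ yields $1/E_{n+1}^p \geq 1/E_n^p + pc_5 t_{n+1}^p$; telescoping together with $E_0 = \n{f} \leq 1$ then gives $E_n \leq \br{1 + pc_5 \sum_{k=1}^n t_k^p}^{-1/p}$. The theorem's conclusion then follows from $\n{f_n} \leq (1+\eta_0) E_n$, where $\eta_0$ is bounded by a constant depending only on $q$ and $\gamma$ since $E_n \leq 1$ and $t_n \leq 1$. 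The main difficulty in this program lies in the second step: one must verify that the peculiar numerical constants $3^{-p}\br{64(8\gamma)^{p/q}}^{-1}$ appearing in the hypotheses are precisely large enough to make the absorption of all error terms succeed while leaving a strictly positive coefficient $c_5$, which is essentially the reason for the specific numerical constants in the theorem statement.
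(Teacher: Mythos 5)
Your proposal is correct and follows essentially the same route as the paper's proof of Theorem~\ref{awcga_rate} (which generalizes Theorem~\ref{t_awcga_rate} to subsequences): apply Lemma~\ref{reference_lemma_E_m<E_n} with $\epsilon=0$, $A=1$, bound $\beta_n$ by a small multiple of $t_{n+1}\n{f_n}$ using the explicit choices of $\delta_n,\eta_n$, optimize in $\lambda$ to get the recursion $E_{n+1}\leq E_n\br{1-c\,t_{n+1}^pE_n^p}$, and telescope the reciprocal $p$-th powers. The only differences are cosmetic (you use the closed-form minimizers for $\mu$ and $\lambda$ and the inequality $(1-y)^{-p}\geq 1+py$, where the paper plugs in explicit values and uses $x^p\leq x$ followed by $(1-y)^{-1}\geq 1+y$), and the deferred numerical check of the constants does go through.
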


The particular advantage of this result is that the rate of convergence 
of the AWCGA with such perturbation and error sequences is the same as 
the rate of convergence of the WCGA~\eqref{t_wcga_rate_estimate}.\\
We prove the following theorem, which is essentially 
Theorem~\ref{t_awcga_rate} but with 
conditions~\eqref{t_awcga_rate_delta_condition} 
and~\eqref{t_awcga_rate_eta_condition} imposed only on 
some subsequence $\seq[k]{n}$. 
The proof we give here is basically the proof of Theorem~2.4 
from~\cite{t_gtabsa} with some minor distinctions.

\begin{theorem}\label{awcga_rate}
Let $X \in \mathcal{P}_q$ be a Banach space and $\D$ be any dictionary.
Let $\seq[n]{t}$ be a weakness sequence.
Then for any $f \in A_1(\D)$ the AWCGA
with a perturbation sequence $\seq[n][0]{\delta}$
and a bounded error sequence $\seq[n]{\eta}$ such that
for some subsequence $\seq[k]{n}$
\begin{align}
\label{awcga_rate_delta_condition}
\delta_{n_k} &= t_{n_k+1}^p \n{f_{n_k}}^p 3^{-p} 
\br{64\br{8\gamma}^{p/q}}^{-1},
\quad k \geq 1;
\\
\label{awcga_rate_eta_condition}
\eta_{n_k} &= t_{n_k+1}^p E_{n_k}^p 3^{-p} 
\br{64\br{8\gamma}^{p/q}}^{-1},
\quad k \geq 1,
\end{align}
where $p = q/(q-1)$, satisfies the estimate
\begin{equation*}
\n{f_n} \leq C \br{1 + \sm[k][1][N] t_{n_k}^p}^{-1/p},
\end{equation*}
where $C = C(q, \gamma, \eta_0) = 8(1+\eta_0)\gamma^{1/q}$ and 
$N = N(n) = \max\{k\in\mathbb{N} \, | \, n_k \leq n\}$.
\end{theorem}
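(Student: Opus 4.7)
The plan is to adapt the proof of Theorem~\ref{t_awcga_rate} from~\cite{t_gtabsa}, applying its key recursive step only at the indices $n_k$ of the subsequence and exploiting monotonicity of $E_n$ elsewhere. Since $f \in A_1(\D)$, Lemma~\ref{reference_lemma_E_m<E_n} with $\epsilon = 0$, $f^{\epsilon} = f$, and $A = 1$ gives, for every $n \geq 0$,
$$E_{n+1} \leq \n{f_n}\inf_{\lambda \geq 0}\br{1 + \delta_n - \lambda t_{n+1}\br{1 - \delta_n - \frac{\beta_n}{\n{f_n}}} + 2\rho\br{\frac{\lambda}{\n{f_n}}}}.$$
The estimate $\beta_n \leq c_0 (\delta_n + \eta_n)^{1/p}$ derived in the proof of Theorem~\ref{awcga_convergence} remains valid here, and substituting~\eqref{awcga_rate_delta_condition} and~\eqref{awcga_rate_eta_condition} together with $E_{n_k} \leq \n{f_{n_k}}$ yields $\beta_{n_k} \leq c_1 t_{n_k+1}\n{f_{n_k}}$. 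With the specific denominator $3^p\cdot 64(8\gamma)^{p/q}$ in the definitions of $\delta_{n_k}$ and $\eta_{n_k}$, both $\delta_{n_k}$ and $\beta_{n_k}/\n{f_{n_k}}$ are small, so the bracket $1 - \delta_{n_k} - \beta_{n_k}/\n{f_{n_k}}$ is bounded below by $1/2$.

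Next I would optimize over $\lambda \geq 0$ using $\rho(u) \leq \gamma u^q$ and the identity $\inf_{x\geq 0}(2\gamma x^q - bx) = -(q-1)q^{-p}(2\gamma)^{-p/q}b^p$ already used in the convergence proof. Absorbing the positive $\delta_{n_k}$ term (which is itself $O(t_{n_k+1}^p \n{f_{n_k}}^p)$) into the negative $\lambda t_{n_k+1}$ contribution gives
$$E_{n_k+1} \leq \n{f_{n_k}}\br{1 - \tilde C \, t_{n_k+1}^p},$$
where $\tilde C$ is computed explicitly so that the final constant comes out to $8(1+\eta_0)\gamma^{1/q}$. Combining with monotonicity $E_{n_{k+1}} \leq E_{n_k+1}$ and $\n{f_{n_k}} \leq (1+\eta_0)E_{n_k}$, we obtain the recursion
$$E_{n_{k+1}} \leq (1+\eta_0) E_{n_k}\br{1 - \tilde C \, t_{n_k+1}^p}.$$

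Raising both sides to the power $-p$ and applying Bernoulli's inequality $(1-x)^{-p} \geq 1 + px$ for $0 \leq x \leq 1$, the recursion linearizes to
$$E_{n_{k+1}}^{-p} \geq (1+\eta_0)^{-p}\br{E_{n_k}^{-p} + p\tilde C \, E_{n_k}^{-p} t_{n_k+1}^p},$$
which telescopes, starting from $E_{n_1}^{-p} \geq \n{f}^{-p} \geq 1$ (using $f \in A_1(\D)$ so $\n{f}\leq 1$), to yield $E_{n_k}^{-p} \geq C^{-p}\br{1 + \sm[j][1][k-1] t_{n_j+1}^p}$. For arbitrary $n$, writing $N = N(n)$ and using that $E_n$ is non-increasing together with $\n{f_n} \leq (1+\eta_0)E_n \leq (1+\eta_0) E_{n_N}$ completes the estimate.

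The main obstacle I foresee is the delicate bookkeeping of constants: the particular denominator $64(8\gamma)^{p/q}\cdot 3^p$ in~\eqref{awcga_rate_delta_condition}--\eqref{awcga_rate_eta_condition} is calibrated precisely to synchronize with the optimization in $\lambda$ and the factor $(1+\eta_0)^{-p}$ from Bernoulli's inequality, so any miscomputation of $\tilde C$ cascades through the iteration and spoils the final constant. A lesser subtlety is the mild index shift between the $t_{n_k+1}^p$ arising naturally from the lemma and the $t_{n_k}^p$ appearing in the theorem's statement, which is handled by a straightforward reindexing of the summation.
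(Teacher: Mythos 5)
Your overall strategy matches the paper's: apply Lemma~\ref{reference_lemma_E_m<E_n} at the subsequence indices with $\epsilon=0$, $A=1$, derive a one-step recursion for $E_{n_k}$, invert to the power $-p$, and telescope. However, there is a genuine and fatal gap at the single most delicate point of the argument.

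You write the recursion as $E_{n_{k+1}} \leq (1+\eta_0)\,E_{n_k}\bigl(1 - \tilde C\, t_{n_k+1}^p\bigr)$, i.e.\ you replaced the step-dependent factor $(1+\eta_{n_k})$ by the uniform bound $(1+\eta_0)$. This kills the proof. With a fixed $\eta_0>0$, each application of the recursion injects a multiplicative loss $(1+\eta_0)$; after $K$ steps you accumulate $(1+\eta_0)^{K}$, which overwhelms any polynomial gain coming from the product $\prod\bigl(1 - \tilde C\,t_{n_k+1}^p\bigr)$. Indeed, after your Bernoulli step the recursion reads $E_{n_{k+1}}^{-p} \geq (1+\eta_0)^{-p}\bigl(E_{n_k}^{-p} + p\tilde C\,E_{n_k}^{-p}t_{n_k+1}^p\bigr)$; the prefactor $(1+\eta_0)^{-p}$ is strictly less than $1$ at every step, so the telescoping does not yield $E_{n_K}^{-p} \geq c\,(1 + \sum t_{n_k+1}^p)$, but rather something that decays geometrically in $K$. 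The whole point of hypothesis~\eqref{awcga_rate_eta_condition} is that $\eta_{n_k}$ is not an arbitrary bounded quantity but equals $t_{n_k+1}^p E_{n_k}^p\,3^{-p}\bigl(64(8\gamma)^{p/q}\bigr)^{-1}$; this is exactly small enough that $(1+\eta_{n_k})$ can be absorbed into the factor $\bigl(1 - \tfrac14(8\gamma)^{-p/q}t_{n_k+1}^p\n{f_{n_k}}^p\bigr)$, leaving a clean loss-free recursion of the form $E_{n_{k+1}} \leq E_{n_k}\bigl(1 - \alpha\,t_{n_k+1}^p E_{n_k}^p\bigr)$. The paper uses $\eta_0$ exactly once, at the very end when passing from $E_n$ to $\n{f_n}$ via $\n{f_n}\leq(1+\eta_n)E_n\leq(1+\eta_0)E_{n_N}$; it never enters the iteration.

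A secondary point: after optimizing in $\lambda$ the negative term has the form $t_{n_k+1}^p\n{f_{n_k}}^p$, not merely $t_{n_k+1}^p$. You drop the $\n{f_{n_k}}^p$ when you write $1-\tilde C\,t_{n_k+1}^p$. That factor is essential structurally, because after inverting to $E_{n_k}^{-p}$ it cancels with $E_{n_k}^{-p}$ and produces the additive step $E_{n_{k+1}}^{-p}\geq E_{n_k}^{-p}+\alpha t_{n_k+1}^p$, which is what makes the telescoping give $\bigl(1+\sum t_{n_k+1}^p\bigr)$ rather than a product. Your Bernoulli step fortuitously leaves an extra $E_{n_k}^{-p}\geq1$ in the gain term, which you could discard, but that is accidental rather than a consequence of keeping the right structure. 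To repair the argument, restore the $\n{f_{n_k}}^p$ factor and, crucially, keep the exact $(1+\eta_{n_k})$ rather than $(1+\eta_0)$, using conditions~\eqref{awcga_rate_delta_condition}--\eqref{awcga_rate_eta_condition} to absorb both $\delta_{n_k}$ and $\eta_{n_k}$ into the negative term; only then does the recursion close.
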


Note that once a subsequence $\seq[k]{n}$, 
for which conditions~\eqref{awcga_rate_delta_condition} 
and~\eqref{awcga_rate_eta_condition} hold, is found,
only the choice of elements $\phi_{n_k}$ is essential for the
rate of convergence, 
so arbitrary elements $\phi_j$ can be chosen on other steps.

\begin{proof}[Proof of Theorem~\ref{awcga_rate}]
Take any $f \in A_1(\D)$. 
Then Lemma~\ref{reference_lemma_E_m<E_n} applied with the subsequence 
$\seq[k]{n}$ provides with $\epsilon = 0$ and $A = 1$
\begin{equation*}
E_{n_{k+1}} \leq \n{f_{n_k}} \inf_{\lambda\geq0} 
\br{1 + \delta_{n_k} - \lambda t_{n_k+1} \br{1 - \delta_{n_k} 
- \frac{\beta_{n_k}}{\n{f_{n_k}}}} + 2\rho\br{\frac{\lambda}{\n{f_{n_k}}}}},
\end{equation*}
where 
\begin{equation*}
\beta_{n_k} = \inf_{\mu\geq0} 
\frac{\delta_{n_k} + \eta_{n_k} + 2\rho\br{\mu(2 + \eta_{n_k})\n{f}}}{\mu}.
\end{equation*}
Note that since $\n{f} \leq 1$ and $\gamma \geq 2^{-q}$
\begin{align*}
\eta_{n_k} 
&\leq 3^{-p} \br{64(8 \, 2^{-q})^{p/q}}^{-1}
= \br{\frac{2}{3}}^p \frac{1}{8 \, 8^p}
< \frac{1}{2},
\\
\delta_{n_k} 
&\leq (1 + \eta_{n_k})^p 3^{-p} \br{64(8 \, 2^{-q})^{p/q}}^{-1}
< \frac{1}{8 \, 8^p}
< \frac{1}{16}.
\end{align*}
Hence, using an estimate $\rho(u) \leq \gamma u^q$,
and taking $\mu = \n{f_{n_k}}^{p/q} 3^{-p} \br{4(8\gamma)^{p/q}}^{-1}$
we obtain
\begin{align*}
\beta_{n_k} 
&\leq \br{\delta_{n_k} + \eta_{n_k}} \mu^{-1} 
+ 2\gamma \mu^{q-1} (2 + \eta_{n_k})^q \n{f}^q
\\
&= \frac{1}{16} \n{f_{n_k}} + \br{\frac{2+\eta_n}{12}}^q \n{f_{n_k}}
< \frac{3}{8} \n{f_{n_k}}.
\end{align*}
Thus, since $\rho(u) \leq \gamma u^q$ and $\delta_{n_k} < 1/16$,
\begin{equation*}
E_{n_{k+1}} \leq \n{f_{n_k}} \inf_{\lambda\geq0} 
\br{1 + \delta_{n_k} - \frac{9}{16} \lambda t_{n_k+1} 
+ 2\gamma \lambda^q \n{f_{n_k}}^{-q}}.
\end{equation*}
Take $\lambda = t_{n_k+1}^{p/q} \n{f_{n_k}}^p (8\gamma)^{-p/q}$.
Then using~\eqref{awcga_G_n} and 
conditions~\eqref{awcga_rate_delta_condition} we get
\begin{align*}
E_{n_{k+1}} 
&\leq \n{f_{n_k}} \br{1 + \delta_{n_k} 
- \frac{5}{16} \frac{t_{n_k+1}^p \n{f_{n_k}}^p}{(8\gamma)^{p/q}}}
\leq \n{f_{n_k}} \br{1 - \frac{t_{n_k+1}^p \n{f_{n_k}}^p}{4 (8\gamma)^{p/q}}}
\\
&\leq E_{n_k} \br{1 + \frac{t_{n_k+1}^p E_{n_k}^p}{3^p \, 64 (8\gamma)^{p/q}}}
\br{1 - \frac{t_{n_k+1}^p E_{n_k}^p}{4 (8\gamma)^{p/q}}}
< E_{n_k} \br{1 - \alpha t_{n_k+1}^p E_{n_k}^p},
\end{align*}
where $\alpha = \br{8 (8\gamma)^{p/q}}^{-1}$.
Then, since $x^p \leq x$ for $0 \leq x \leq 1$,
\begin{equation}\label{awcga_rate_E_n^p_estimate}
E_{n_{k+1}}^p \leq E_{n_k}^p \br{1 - \alpha t_{{n_k}+1}^p E_{n_k}^p}.
\end{equation}
Using the technique from Lemma~2.16 from~\cite{t_ga},
we will show that for any $m \geq 0$
\begin{equation}\label{awcga_rate_E_n_induction}
E_{n_m} \leq \alpha^{-1/p} \br{1 + \sm[k][1][m] t_{n_k}^p}^{-1/p}.
\end{equation}
We use induction by $m$ to prove~\eqref{awcga_rate_E_n_induction}.
First, note that $\alpha < 1$ since $\gamma \geq 2^{-q}$.
Then for $m = 0$
$$
E_0 = \n{f} \leq 1 < \alpha^{-1/p},
$$
hence the estimate holds. Assume that it holds for $m$.
Then inequality~\eqref{awcga_rate_E_n^p_estimate} provides
\begin{align*}
E_{n_{m+1}}^{-p} 
&\geq E_{n_m}^{-p} \br{1 - \alpha t_{{n_m}+1}^p E_{n_m}^p}^{-1}
\geq E_{n_m}^{-p} \br{1 + \alpha t_{{n_m}+1}^p E_{n_m}^p}
\\
&= E_{n_m}^{-p} + \alpha t_{{n_m}+1}^p
\geq \alpha \br{1 + \sm[k][1][m+1] t_{n_k}^p},
\end{align*}
since by the assumption estimate~\eqref{awcga_rate_E_n_induction} holds for $m$.
Thus the induction holds and estimate~\eqref{awcga_rate_E_n_induction} 
is correct for any $m \geq 0$. Then by~\eqref{awcga_G_n}
$$
\n{f_n}
\leq (1+\eta_n) E_n \leq (1+\eta_n) E_{n_N}
\leq C \br{1 + \sm[k][1][N] t_{n_k}^p}^{-1/p},
$$
where $C = (1 + \eta_0) \alpha^{-1/p} = 8 (1 + \eta_0) \gamma^{1/q}$.
\end{proof}

The following corollary shows that the AWCGA will converge with the same
rate as the WCGA as long as adequately precise computations are made 
sufficiently often. 

\begin{corollary}
Let $X \in \mathcal{P}_q$ be a Banach space and $\D$ be any dictionary.
Let $\seq[n]{t}$, $\seq[n][0]{\delta}$ and $\seq[n]{\eta}$ 
be any such sequences that for some subsequence $\seq[k]{n}$ 
with $M = \sup\limits_{k\in\mathbb{N}} \m{n_{k+1} - n_k} < \infty$
\begin{align*}
&t_{n_k+1} \geq t > 0,
\\
&\delta_{n_k} \leq t^p \n{f_{n_k}}^p 3^{-p} 
\br{64\br{8\gamma}^{p/q}}^{-1},
\\
&\eta_{n_k} \leq t^p E_{n_k}^p 3^{-p} 
\br{64\br{8\gamma}^{p/q}}^{-1},
\end{align*}
where $p = q/(q-1)$. Then for any $f \in A_1(\D)$ the AWCGA
with the weakness sequence $\seq[n]{t}$, the perturbation sequence 
$\seq[n][0]{\delta}$, and the bounded error sequence $\seq[n]{\eta}$ 
satisfies the estimate
\begin{equation*}
\n{f_n} \leq C n^{-1 + 1/q},
\end{equation*}
where $C = C(q, \gamma, \eta_0, M)$.
\end{corollary}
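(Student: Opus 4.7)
The plan is to apply (a minor extension of) Theorem~\ref{awcga_rate} along the subsequence $\seq[k]{n}$, and then convert the resulting subsequence-indexed rate into a rate in $n$ using the bounded-gap hypothesis $|n_{k+1} - n_k| \leq M$.

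First, I would observe that the corollary's hypotheses are no weaker than those of Theorem~\ref{awcga_rate} applied to $\seq[k]{n}$. Since $t_{n_k+1} \geq t$, the upper bounds on $\delta_{n_k}$ and $\eta_{n_k}$ demanded by the corollary (which involve $t^p$) are at least as strict as those one would obtain by replacing $t^p$ with $t_{n_k+1}^p$. Moreover, the equalities in~\eqref{awcga_rate_delta_condition}--\eqref{awcga_rate_eta_condition} enter the proof of Theorem~\ref{awcga_rate} only as upper bounds; smaller values only improve the estimate for $\beta_{n_k}$ and the $1 + \delta_{n_k}$ term. Repeating that proof with $t^p$ in place of $t_{n_k+1}^p$ at each inductive step then yields
\begin{equation*}
E_{n_{k+1}}^{-p} \geq E_{n_k}^{-p} + \alpha t^p, \qquad \alpha = \br{8(8\gamma)^{p/q}}^{-1},
\end{equation*}
which, together with the base estimate $E_{n_1} \leq \n{f} \leq 1$, iterates to $E_{n_m}^{-p} \geq 1 + (m-1)\alpha t^p$ for every $m \geq 1$.

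Second, I would use the bounded-gap assumption to convert this subsequential bound into an estimate in $n$. For $n \geq n_1$, set $N = N(n) = \max\{k \in \mathbb{N} : n_k \leq n\}$; then $n < n_{N+1} \leq n_N + M \leq n_1 + NM$, and hence $N \geq (n - n_1)/M$. Since $n_N \leq n$ implies $\Phi_{n_N} \subseteq \Phi_n$ and therefore $E_n \leq E_{n_N}$, combining this with~\eqref{awcga_G_n} and $\eta_n \leq \eta_0$ gives
\begin{equation*}
\n{f_n} \leq (1 + \eta_0) E_{n_N} \leq (1 + \eta_0) \br{1 + (N-1)\alpha t^p}^{-1/p} \leq C\, n^{-1/p}
\end{equation*}
for all sufficiently large $n$, with $C$ depending on $q, \gamma, \eta_0, M, t, n_1$; small $n$ are absorbed into $C$ via the trivial bound $\n{f_n} \leq \n{f} \leq 1$. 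Since $1/p = 1 - 1/q$, this is precisely the claimed rate.

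The main, and fairly mild, obstacle is the first step: one must check that the proof of Theorem~\ref{awcga_rate} really does go through with its equalities replaced by the corollary's inequalities. This reduces to confirming that neither $\delta_{n_k}$ nor $\eta_{n_k}$ appears in that proof in a role requiring a lower bound, which is evident on inspection. Everything thereafter is a routine counting argument combining the subsequence rate with the monotonicity of $E_n$ and the gap bound $|n_{k+1} - n_k| \leq M$.
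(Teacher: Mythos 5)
Your proof is correct and follows the approach that the corollary is plainly meant to follow, namely re-running the proof of Theorem~\ref{awcga_rate} (which uses $\delta_{n_k}$ and $\eta_{n_k}$ only as upper bounds, so replacing the equalities by inequalities and $t_{n_k+1}^p$ by the smaller $t^p$ costs nothing), obtaining the recursion $E_{n_{k+1}}^{-p} \geq E_{n_k}^{-p} + \alpha t^p$, iterating it, and then using the bounded-gap hypothesis $|n_{k+1}-n_k|\leq M$ together with the monotonicity $E_n \leq E_{n_N}$ to convert the subsequential bound into a bound in $n$. The one point worth flagging is the constant: as you correctly note, the $C$ that this argument produces depends on $t$ (and implicitly on $n_1$, which is absorbed into $M$ once one includes the index $n_0 = 0$ that the proof of Theorem~\ref{awcga_rate} uses as its base case). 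The corollary as stated in the paper lists $C = C(q,\gamma,\eta_0,M)$ without $t$; this appears to be an omission, since the bound $E_{n_N}^{-p} \gtrsim N\alpha t^p$ manifestly degrades as $t\to 0$, and indeed the two $\ell_q$ corollaries that follow do record $t$ in their constants. Your version is the more careful one.
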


We also state two corollaries that give the rate of convergence of the 
adaptive AWCGA for the standard dictionary in $\ell_p$ spaces.

\begin{corollary}
Let $X = \ell_q$, $1 < q \leq 2$ and $\D = \seq[n]{\pm e}$, 
where $\seq[n]{e}$ is a standard basis in $\ell_q$. 
Let $a = \seq[n]{a} \in \ell_1$ be a positive nonincreasing sequence
with $\n{a}_1 \leq 1$. 
Let $\seq[n]{t}$, $\seq[n][0]{\delta}$, and $\seq[n]{\eta}$ 
be any such sequences that for some subsequence $\seq[k]{n}$ 
with $M = \sup\limits_{k\in\mathbb{N}} \m{n_{k+1} - n_k} < \infty$
\begin{align*}
t_{n_k+1} &\geq t > 0,
\\
\delta_{n_k} &\leq \frac{1}{8} \br{\frac{t}{24}}^{p} 
\br{\sm[j][n_k+1][\infty] a_j^q}^{p/q},
\\
\eta_{n_k} &\leq \frac{1}{8} \br{\frac{t}{24}}^{p} 
\br{\sm[j][n_k+1][\infty] a_j^q}^{p/q}.
\end{align*}
Then for the element $f = \sm[n] a_n e_n$ the AWCGA
with the weakness sequence $\seq[n]{t}$, the perturbation 
sequence $\seq[n][0]{\delta}$, and the bounded error 
sequence $\seq[n]{\eta}$ satisfies the estimate
\begin{equation*}
\n{f_n} \leq C n^{-1 + 1/q},
\end{equation*}
where $C = C(q, t, \eta_0, M)$.
\end{corollary}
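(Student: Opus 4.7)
The plan is to reduce this corollary to the preceding one by verifying that, for $X = \ell_q$ with its standard basis as the dictionary and for the specific element $f = \sm[n] a_n e_n$, the (non-adaptive) hypotheses imposed here on $\delta_{n_k}$ and $\eta_{n_k}$ imply the adaptive hypotheses of that corollary. First I would observe that $f \in A_1(\D)$, since the coordinates $a_n$ are nonnegative and $\n{a}_1 \leq 1$. For the smoothness constant of $\ell_q$ one may take $\gamma = 1/q$, as recorded in the introduction.

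The key structural observation is a uniform lower bound on $E_{n_k}$ that holds for every realization of the algorithm. Since $\Phi_n$ is spanned by $n$ basis vectors, for any index set $\Gamma_n \subset \mathbb{N}$ of chosen basis elements the optimal approximation coincides with the coordinate projection, giving $E_n = \br{\sum_{j \notin \Gamma_n} a_j^q}^{1/q}$. Because $\{a_j^q\}$ is nonincreasing, removing any $n$ indices from $\sm[j] a_j^q$ leaves at least as much mass as removing the largest $n$, hence
\[
E_n \geq \br{\sm[j][n+1][\infty] a_j^q}^{1/q}.
\]
Combined with the bound $\n{f_{n_k}} \geq E_{n_k}$ coming from $G_{n_k}\in\Phi_{n_k}$, this yields
\[
\n{f_{n_k}}^p \;\geq\; E_{n_k}^p \;\geq\; \br{\sm[j][n_k+1][\infty] a_j^q}^{p/q}.
\]

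Substituting these lower bounds into the conditions required by the preceding corollary reduces the verification to the numerical inequality
\[
\frac{1}{8}\br{\frac{t}{24}}^{p} \leq \frac{t^p}{3^p \cdot 64 \cdot (8\gamma)^{p/q}},
\]
which is equivalent to $(8\gamma)^{p/q} \leq 8^{p-1}$. Here the duality identity $p = q/(q-1)$ gives the clean relation $p/q = p-1 = 1/(q-1)$, so for $\gamma = 1/q$ we have $(8/q)^{p/q} \leq 8^{p/q} = 8^{p-1}$; the same computation disposes of the analogous bound on $\eta_{n_k}$. Once both hypotheses of the preceding corollary are verified, it applies directly and delivers $\n{f_n} \leq C n^{-1+1/q}$ with $C = C(q, t, \eta_0, M)$.

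The main (minor) obstacle is the constant chase in the last step; the tight identity $p/q = p - 1$ produced by $p = q/(q-1)$ is precisely what makes the specific numerical factors $\frac{1}{8}$ and $\frac{t}{24}$ in the stated hypotheses line up with the general bound from the preceding corollary.
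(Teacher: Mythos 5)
Your proposal is correct and follows the natural route the paper intends: reduce to the immediately preceding corollary by showing that, in $\ell_q$ with the standard basis dictionary, $E_n$ equals the $\ell_q$ norm of the unselected tail, so $\|f_{n_k}\|^p \geq E_{n_k}^p \geq (\sum_{j>n_k} a_j^q)^{p/q}$, and then checking the numerical inequality via $p/q = p-1$ and $\gamma = 1/q \leq 1$. The paper gives no proof for this corollary, but your argument is exactly the intended reduction and all the steps hold.
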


\begin{corollary}
Let $X = \ell_q$, $2 \leq q < \infty$ and $\D = \seq[n]{\pm e}$, 
where $\seq[n]{e}$ is a standard basis in $\ell_q$. 
Let $a = \seq[n]{a} \in \ell_1$ be a positive nonincreasing sequence
with $\n{a}_1 \leq 1$. 
Let $\seq[n]{t}$, $\seq[n][0]{\delta}$, and $\seq[n]{\eta}$ 
be any such sequences that for some subsequence $\seq[k]{n}$ 
with $M = \sup\limits_{k\in\mathbb{N}} \m{n_{k+1} - n_k} < \infty$
\begin{align*}
t_{n_k+1} &\geq t > 0,
\\
\delta_{n_k} &\leq \frac{t^2}{4608} \br{\sm[j][n_k+1][\infty] a_j^q}^{2/q},
\\
\eta_{n_k} &\leq \frac{t^2}{4608} \br{\sm[k][n_k+1][\infty] a_j^q}^{2/q}.
\end{align*}
Then for the element $f = \sm[n] a_n e_n$ the AWCGA
with the weakness sequence $\seq[n]{t}$, the perturbation 
sequence $\seq[n][0]{\delta}$, and the bounded error 
sequence $\seq[n]{\eta}$ satisfies the estimate
\begin{equation*}
\n{f_n} \leq C n^{-1/2},
\end{equation*}
where $C = C(q, t, \eta_0, M)$.
\end{corollary}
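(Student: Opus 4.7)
My plan is to deduce the corollary from Theorem~\ref{awcga_rate} and then convert its tail-sum estimate into a rate in $n$ using the bounded-gap condition $\m{n_{k+1} - n_k} \leq M$ on the subsequence. The first preparatory step is to record that for $2 \leq q < \infty$ the estimate $\rho_q(u) \leq \tfrac{q-1}{2} u^2$ puts $\ell_q$ into $\mathcal{P}_2$ with constant $\gamma = (q-1)/2$, so in the language of Theorem~\ref{awcga_rate} the power type equals $2$ and the exponent is $p = 2$; the second is to note that $f = \sm[n] a_n e_n$ lies in $A_1(\D)$, since $\sm[n] \m{a_n} = \n{a}_1 \leq 1$ and the closed convex hull of $\D = \{\pm e_n\}_n$ in $\ell_q$ is exactly the unit $\ell_1$-ball.

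The technical heart is to verify on the subsequence $\seq[k]{n}$ the hypotheses \eqref{awcga_rate_delta_condition}--\eqref{awcga_rate_eta_condition} of Theorem~\ref{awcga_rate} (in the obvious ``$\leq$'' version, which is the only form the proof of that theorem actually uses). The key specific observation is the deterministic lower bound
$$
\n{f_{n_k}} \geq E_{n_k} \geq \br{\sm[j][n_k+1][\infty] a_j^q}^{1/q},
$$
valid for every realization of the AWCGA in this setting: the best $n_k$-term approximation of $f$ from the span of any $n_k$ basis vectors is given by coordinate projection, and since $\seq[n]{a}$ is nonincreasing the resulting $\ell_q$-norm of the removed tail is minimized by discarding the first $n_k$ coordinates. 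Substituting the constants $p = 2$ and $\gamma = (q-1)/2$ into $3^{-p}(64 (8\gamma)^{p/q})^{-1}$ and using $t_{n_k+1} \geq t$, the numerical factor $1/4608 = (1/8)(1/24)^2$ in the corollary's hypotheses translates into the required upper bounds on $\delta_{n_k}$ and $\eta_{n_k}$.

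Theorem~\ref{awcga_rate} then yields $\n{f_n} \leq C \br{1 + \sm[k][1][N] t_{n_k}^p}^{-1/p}$ with $p = 2$ and $C = 8(1+\eta_0)\sqrt{(q-1)/2}$. Each summand is at least $t^2$ since $t_{n_k+1} \geq t$, and the bounded-gap assumption $\m{n_{k+1} - n_k} \leq M$ yields $n_k \leq n_1 + (k-1) M$, hence $N = N(n) \geq (n - n_1)/M$ for all sufficiently large $n$. Therefore
$$
\n{f_n} \leq C \br{1 + t^2 (n - n_1)/M}^{-1/2} \leq C'(q, t, \eta_0, M)\, n^{-1/2},
$$
possibly after enlarging $C'$ to absorb finitely many small values of $n$. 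The main obstacle will be the bookkeeping in the middle step: checking that the single numerical factor $1/4608$, combined with the deterministic lower bound on $E_{n_k}$, is strong enough to imply the Theorem~\ref{awcga_rate} bounds uniformly for $q \in [2,\infty)$; the final conversion from the subsequence sum to the $n^{-1/2}$ rate is then routine.
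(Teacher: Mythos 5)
Your proposal is correct and is the natural (and surely intended) route: the corollary is stated in the paper without proof, and it is precisely the specialization of Theorem~\ref{awcga_rate} to $\ell_q$ with $\gamma=(q-1)/2$, $p=2$, combined with the deterministic tail bound $E_{n_k}\geq\bigl(\sum_{j>n_k}a_j^q\bigr)^{1/q}$ (valid because the $a_j$ are nonincreasing, so the optimal projection onto any $n_k$ basis vectors leaves at least the tail) and the bounded-gap conversion $N(n)\gtrsim n/M$. The one check you flagged but did not complete does go through: you need $576\bigl(4(q-1)\bigr)^{2/q}\leq 4608$, i.e.\ $\bigl(4(q-1)\bigr)^{2/q}\leq 8$; the function $q\mapsto\bigl(4(q-1)\bigr)^{2/q}$ on $[2,\infty)$ equals $4$ at $q=2$, tends to $1$ as $q\to\infty$, and attains its maximum (about $4.2$) near $q\approx2.4$, so the inequality holds uniformly. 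Your remark that the proof of Theorem~\ref{awcga_rate} only uses the conditions~\eqref{awcga_rate_delta_condition}--\eqref{awcga_rate_eta_condition} as upper bounds is also correct: inspecting that proof, every step involving $\delta_{n_k}$ and $\eta_{n_k}$ is a one-sided estimate, so replacing ``$=$'' by ``$\leq$'' is harmless. One small notational slip: when bounding the tail sum you write ``$\sum_{k=1}^N t_{n_k}^p$ with each summand $\geq t^2$ since $t_{n_k+1}\geq t$''; the recursion in the proof of Theorem~\ref{awcga_rate} actually accumulates $t_{n_k+1}^p$, which is exactly the quantity the hypothesis controls, so the conclusion stands.
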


\noindent{\bf Acknowledgements.}
I am grateful to V.\,N.~Temlyakov for his guidance during the writing 
of this paper and his support throughout the research process.

\bigskip
\bibliographystyle{abbrv}
\bibliography{bibliography}

\end{document}